\newcommand{\nc}{\newcommand}
\numberwithin{equation}{section}
\newtheorem{thm}{Theorem}[section]
\newtheorem{prop}[thm]{Proposition}
\newtheorem{lem}[thm]{Lemma}
\newtheorem{cor}[thm]{Corollary}
\theoremstyle{remark}
\newtheorem{rem}[thm]{Remark}
\newtheorem{dfn}[thm]{Definition}
\nc{\gl}{\mathfrak{gl}}
\nc{\GL}{\mathfrak{GL}}
\nc{\g}{\mathfrak{g}}
\nc{\gh}{\widehat\g}
\nc{\h}{\mathfrak{h}}
\nc{\la}{\lambda}
\nc{\al}{\alpha }
\nc{\be}{\beta }
\nc{\ve}{\varepsilon }
\nc{\om}{\omega }
\nc{\ta}{\theta}
\nc{\veps}{\varepsilon}
\nc{\ch}{{\mathop {\rm ch}}}
\nc{\Tr}{{\mathop {\rm Tr}\,}}
\nc{\Id}{{\mathop {\rm Id}}}
\nc{\ad}{{\mathop {\rm ad}}}
\nc{\bra}{\langle}
\nc{\ket}{\rangle}
\nc{\x}{{\bf x}}
\nc{\bs}{{\bf s}}
\nc{\bp}{{\bf p}}
\nc{\bc}{{\bf c}}
\nc{\pa}{\partial}
\nc{\ld}{\ldots}
\nc{\cd}{\cdots}
\nc{\hk}{\hookrightarrow}
\nc{\T}{\otimes}
\newcommand{\bea}{\begin{equation}}
\newcommand{\ena}{\end{equation}}
\nc{\gr}{\mathrm{gr}}
\nc{\ov}{\overline}
\nc{\cO}{\mathcal O}
\nc{\msl}{\mathfrak{sl}}
\nc{\mgl}{\mathfrak{gl}}
\nc{\U}{\mathrm U}
\nc{\V}{\EuScript V}
\nc{\bH}{\EuScript H}
\nc{\Res}{\mathrm{Res\ }}
\newcommand{\bC}{{\mathbb C}}
\newcommand{\Fl}{\EuScript{F}}
\newcommand{\spa}{\mathrm{span}}
\newcommand{\bff}{{\bf f}}
\newcommand{\qbinom}[2]{\binom{#1}{#2}_{\!\! q}}
\begin{document}

\title[The median Genocchi numbers]
{The median Genocchi numbers, $Q$-analogues and continued fractions}

\author{Evgeny Feigin}
\address{Evgeny Feigin:\newline
Department of Mathematics,\newline National Research University Hisger School of Economics,\newline
Vavilova str. 7, 117312, Moscow, Russia\newline
{\it and }\newline
Tamm Theory Division, Lebedev Physics Institute
}
\email{evgfeig@gmail.com}

\begin{abstract}
The goal of this paper is twofold. First, we review the recently developed geometric approach to the
combinatorics of the median Genocchi numbers. The Genocchi numbers appear in this context as Euler characteristics of the degenerate flag varieties.  
Second, we prove that the generating function of the Poincar\' e polynomials of the degenerate flag varieties
can be written as a simple continued fraction. 
As an application we prove that
the Poincar\' e polynomials coincide with the $q$-version of the normalized median Genocchi
numbers introduced by Han and Zeng.
\end{abstract}

\maketitle

\section*{Introduction}
The Genocchi numbers appear in many different contexts (see e.g. \cite{B}, \cite{Du}, \cite{DR}, \cite{DZ},
\cite{DV}, \cite{G}, \cite{V2}).  
Probably, the most well-known definition uses the Seidel triangle
$$
\begin{matrix}
& & & &  & & & & 155 & 155\\
& & & &  & & 17 & 17 & 155 & 310\\
& & & & 3  & 3 & 17 & 34 & 138 & 448\\
& & 1 & 1 & 3  & 6 & 14 & 48 & 104 & 552\\
1 & 1 & 1 & 2 & 2 & 8 & 8 & 56 & 56 & 608
\end{matrix}
$$
By definition, the triangle is formed by the numbers $g_{k,n}$
($k$ is the number of a row counted from bottom to top and $n$ is the number of a column from left to right)
with $n=1,2,\dots$ and $1\le k\le \frac{n+1}{2}$,
subject to the relations $g_{1,1}=1$ and
\[
g_{k,2n}=\sum_{i\ge k} g_{i,2n-1},\ g_{k,2n+1}=\sum_{i\le k} g_{i,2n}.
\]
For example, 
$138=56+48+34$ and $48=14+17+17$. The two sequences of numbers sitting on the edges of the Seidel triangle
are called the Genocchi numbers. More precisely, the Genocchi numbers of the first kind are
$1,1,3,17,155, \dots$ and of the second kind are $1,2,8,56,608,\dots$. 
The latter numbers are also referred to as
the median Genocchi numbers and are denoted by $H_{2n-1}$. For example, $H_1=1$ and $H_7=56$. These numbers
are known to be divisible by the powers of $2$ (see \cite{B}, \cite{Du}):  $H_{2n+1}\div 2^n$. The ratios are 
called the normalized median Genocchi numbers and are denoted by $h_n$. Thus the first values $h_0,h_1,h_2,\dots$
are as follows: \[1,1,2,7,38,295,3098,\dots\]

It has been shown recently (see \cite{Fe2}) that the numbers $h_n$ are analogues ("degenerations") 
of the numbers $n!$. More precisely, let $\Fl_n$ be the variety of flags in an $n$-dimensional space, i.e. 
$\Fl_n$ consists of collections of subspaces $(V_1\subset V_2\subset \dots\subset V_{n-1})$ of a given
$n$-dimensional space $W$ such that $\dim V_k=k$. It is well known that the Euler characteristics of 
$\Fl_n$ is equal to $n!$. Combinatorially, the number $n!$ appears in this context as the number of sequences 
$(I_1\subset I_2\subset \dots\subset I_{n-1})$ of subsets of $\{1,\dots,n\}$ such that $\# I_k=k$. 
The varieties $\Fl_n$ have natural degenerations $\Fl^a_n$, called the degenerate flag varieties
(see \cite{Fe1},\cite{Fe2},\cite{FF}, \cite{FFL}). 
In order to define $\Fl^a_n$ we fix a basis $w_1,\dots,w_n$ of $W$
and the projection maps $pr_k: W\to W$ mapping $w_k$ to $0$ and $w_i$ with $i\ne k$ to $w_i$. 
The degenerate flag varieties consist of collections $(V_1,V_2,\dots,V_{n-1})$ of subspaces of $W$ such that
$\dim V_k=k$ and $pr_{k+1} V_k\subset V_{k+1}$. It turns out that the Euler characteristic of 
$\Fl^a_n$ is equal to the normalized median Genocchi number:
\begin{equation}\label{Ec}
\chi(\Fl^a_n)=h_n.
\end{equation}
Combinatorially this means that the number of sequences $(I_1,I_2,\dots,I_{n-1})$
of subsets of $\{1,\dots,n\}$ such that $\# I_k=k$ and $I_k\subset I_{k+1}\cup \{k+1\}$ is equal
to $h_n$.

In this paper we review (following \cite{Fe2} and \cite{CFR}) the applications of the observation \eqref{Ec} 
to the combinatorics of $h_n$. We give several new combinatorial objects counted by the
normalized median Genocchi numbers. As an application  the formula for the numbers $h_n$ is derived in
terms of binomial coefficients. Using \eqref{Ec} we introduce natural $q$-analogues $h_n(q)$ as 
Poincar\'e polynomials of the degenerate flag varieties. We note that the degenerate flag varieties are
singular, but share the following important property with their classical analogues: the varieties 
$\Fl^a_n$ can be decomposed into a disjoint union of complex (even-dimensional real) affine cells. 
Therefore the Poincar\'e polynomials $P_{\Fl^a_n}(t)$ are functions of $q=t^2$ (odd powers do
not show up). Hence we define 
\[
h_n(q)=P_{\Fl^a_n}(q^{1/2}). 
\]
Obviously, one has $h_n(1)=h_n$.
We note also that the degree of $h_n(q)$ is equal to $n(n-1)/2$, since the complex dimension of $\Fl^a_n$
is $n(n-1)/2$.
In the paper we recall two formulas for the polynomials $h_n(q)$: one uses certain statistic on the set of 
Dellac configurations
(see \cite{De},\cite{Fe2}) and another is obtained via the geometric arguments (see \cite{CFR}). 
Various $q$-analogues of the Genocchi numbers appear in the literature (see e.g. \cite{HZ1}, \cite{HZ2},
\cite{ZZ}). In particular, in \cite{HZ1} Han and Zeng used the $q$-analogues to give a third proof of the 
Barsky theorem (\cite{B}, \cite{Du}).
 
Our new result is the continued fraction presentation of the generating function of the polynomials $h_n(q)$.
Namely, it is convenient to introduce the "reversed" polynomials $\tilde h_n(q)=q^{n(n-1)/2} h_n(q^{-1})$.
Then we have
\begin{thm}
\[
\sum_{n\ge 0} \tilde h_n(q) s^n = \cfrac{1}{1
          - \cfrac{s}{1
          - \cfrac{qs}{1
          - \cfrac{\qbinom{3}{2}s}{1
          -\cfrac{q\qbinom{3}{2}s}{1
          - \cfrac{\qbinom{4}{2}s}{1
          -\cfrac{q\qbinom{4}{2}s}{1
          -\dots}}}}}}}
\]
\end{thm}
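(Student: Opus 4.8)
The plan is to reduce the identity to a purely combinatorial statement about weighted Dyck paths and then to apply the classical correspondence between Stieltjes-type continued fractions and such paths. Throughout write $a_{2k-1}=\qbinom{k+1}{2}$ and $a_{2k}=q\,\qbinom{k+1}{2}$ for $k\ge 1$, so that $a_1=1,\ a_2=q,\ a_3=\qbinom{3}{2},\ a_4=q\,\qbinom{3}{2},\dots$ and the right-hand side of the Theorem is the $S$-fraction whose successive levels carry the weights $a_1 s, a_2 s, a_3 s,\dots$.

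First I would rewrite $\tilde h_n(q)$ combinatorially. By the formulas for $h_n(q)$ recalled above one has $h_n(q)=\sum_{D\in\mathcal{DC}_n}q^{\mathrm{stat}(D)}$, where $\mathcal{DC}_n$ is the set of Dellac configurations of size $n$ and $\mathrm{stat}$ is the statistic of \cite{De,Fe2}; equivalently, $\mathcal{DC}_n$ indexes the cells in the affine paving of $\Fl^a_n$ and $\mathrm{stat}(D)$ is the complex dimension of the cell $D$ (this is the description of \cite{CFR}). Since $\deg h_n(q)=\dim_{\bC}\Fl^a_n=n(n-1)/2$ and this degree is attained, the reversed polynomial is $\tilde h_n(q)=\sum_{D\in\mathcal{DC}_n}q^{\,n(n-1)/2-\mathrm{stat}(D)}$, i.e. the generating polynomial of the complementary statistic $\overline{\mathrm{stat}}=n(n-1)/2-\mathrm{stat}$.

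The heart of the argument is a weight-preserving bijection between $\mathcal{DC}_n$ and a set $\mathcal D_n$ of Dyck paths of semilength $n$ carrying local decorations, under which $\overline{\mathrm{stat}}$ becomes additive along the path: a descent from height $h$ to $h-1$, together with its decoration, contributes a quantity whose $q$-enumeration over the allowed decorations is exactly $a_h$. I would construct the bijection by reading a Dellac configuration column by column, equivalently by building up the chain $V_1\subset V_2\subset\dots\subset V_{n-1}$ subject to the degenerate incidence $pr_{k+1}V_k\subset V_{k+1}$, each column producing one up-step and one down-step. The content to be verified is that the admissible local choices when the path stands at height $2k-1$ amount to selecting a two-dimensional ``move'' inside a $(k+1)$-dimensional ambient space assembled so far, whence recording the inversions (area) of the move yields the Gaussian binomial $\qbinom{k+1}{2}$; the corresponding choice at the neighbouring height $2k$ is the same data displaced by one unit of $\overline{\mathrm{stat}}$ --- an artefact of the reversal, coming from the parity of the step in the dimension formula --- which produces $q\,\qbinom{k+1}{2}$. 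Granting this, one concludes in one of two equivalent ways. Either invoke the standard fact that the $S$-fraction with weights $a_1 s, a_2 s,\dots$ is the ordinary generating function $\sum_n N_n(q)\,s^n$, where $N_n(q)$ is the sum over $\gamma\in\mathcal D_n$ of the product of the $a_h$ over the descents of $\gamma$ (a descent from height $h$ contributing $a_h$); or, to keep the argument self-contained, introduce the level-$k$ truncated generating functions $H_k(s,q)$ --- the same construction with the path required to start at height $k$ --- verify $H_k(0,q)=1$ and the bilinear recursions $H_k=1/(1-a_{k+1}\,s\,H_{k+1})$, and observe that these pin down the continued fraction uniquely. Either way $\sum_{n\ge 0}\tilde h_n(q)\,s^n$ equals the right-hand side of the Theorem.

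I expect the main obstacle to be exactly the weight bookkeeping of the previous paragraph: showing that the local choices at height $h$ assemble, after passage to $\overline{\mathrm{stat}}$, into precisely $\qbinom{k+1}{2}$ or $q\,\qbinom{k+1}{2}$ rather than into an ordinary binomial times a power of $q$ or some other near miss. Obtaining the Gaussian binomial is where the geometry of the degeneration enters essentially --- the weak incidence $pr_{k+1}V_k\subset V_{k+1}$, as opposed to $V_k\subset V_{k+1}$, is precisely what leaves room for the ``area'' of a two-dimensional move --- and isolating the parity-dependent extra factor of $q$ requires tracking each step's contribution to $n(n-1)/2-\mathrm{stat}$ rather than to $\mathrm{stat}$ itself. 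Once the continued fraction is in hand, it is the same one Han and Zeng use to define their $q$-analogue of the normalized median Genocchi numbers (see \cite{HZ1}), so comparing the two yields --- as announced in the abstract --- the identification of $h_n(q)$ with the Han--Zeng polynomials.
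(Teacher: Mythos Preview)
Your strategy is genuinely different from the paper's, and the key step you flag as ``the main obstacle'' is indeed a real gap that you have not closed.

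The paper does not build a bijection from Dellac configurations to decorated Dyck paths at all. Instead it starts from the explicit formula~\eqref{qG} for $h_n(q)$, already proved in \cite{CFR} by quiver-Grassmannian methods, which is a sum over \emph{Motzkin} paths $\bff=(f_0,\dots,f_n)$ with each summand a product of Gaussian binomials times an explicit power of $q$. A short manipulation (Lemma~\ref{L-1}) shows that $q^{-n(n-1)/2}h_n(q)$ is the Flajolet weighted sum over Motzkin paths for the weights
\[
\al_m=q^{-3m}\qbinom{m+2}{2},\qquad \beta_m=q^{-m-1}\qbinom{m+2}{2},\qquad \gamma_m=q^{-2m}\qbinom{m+1}{1}^2.
\]
Replacing $q$ by $q^{-1}$ and applying Flajolet's theorem gives the $J$-fraction~\eqref{f1} for $\tilde h(q,s)$; the $S$-fraction of the Theorem then follows from the standard contraction identity of \cite{DZ}, Lemma~2. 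No bijection, and no direct weight bookkeeping on Dellac configurations, is needed: the Gaussian binomials are already present in~\eqref{qG}.

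Your proposed route could in principle work, but as it stands it is only an outline. You assert that reading a Dellac configuration column by column yields a Dyck path with one up-step and one down-step per column, and that the local choices at height $2k-1$ (resp.\ $2k$) are $q$-counted by $\qbinom{k+1}{2}$ (resp.\ $q\qbinom{k+1}{2}$) for the complementary statistic. Neither the bijection nor the weight computation is actually carried out; you yourself write ``granting this'' and call it ``the content to be verified''. Until that verification is done --- and it is delicate, since the column constraints in a Dellac configuration are not obviously local in the way a Dyck-path factorization requires --- the argument is incomplete. If you want to pursue this line, you would essentially be reproving the Motzkin-path formula~\eqref{qG} bijectively and then contracting; the paper instead imports~\eqref{qG} from \cite{CFR} and proceeds in a few lines.
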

Using this formula, we prove that $\tilde h_n(q)$ coincide with the $q$ version of the normalized median
Genocchi numbers introduced by Han and Zeng in \cite{HZ1}, \cite{HZ2}.
We also show that the Viennot formula (see \cite{V1}, \cite{V3}, \cite{Du}, \cite{DZ}) for the generating 
function of the median Genocchi numbers $H_{2n-1}$ can be derived by specialization at $q=1$.       

Our paper is organized as follows.\\
In Section $1$ we give several definitions of the normalized median Genocchi numbers.\\
In Section $2$ we give two formulas for the polynomials $h_n(q)$.\\
In Section $3$ we obtain the continued fraction presentation for the generating function of 
$\tilde h_n(q)$.

\section{Combinatorics of the normalized median Genocchi numbers}
The normalized median Genocchi numbers $h_n$, $n=0,1,2,\dots$ form a sequence which starts with
$1,1,2,7,38,295$. These numbers enjoy many definitions (see \cite{B}, \cite{Du}, \cite{De}, \cite{G},
\cite{K}, \cite{Sl}, \cite{Fe2}). We recall some of them now.   

\subsection{The Seidel triangle}\label{St}
The Seidel triangle \cite{Se} is formed by the numbers $g_{k,n}$
with $n=1,2,\dots$ and $1\le k\le \frac{n+1}{2}$,
subject to the relations $g_{1,1}=1$ and
\[
g_{k,2n}=\sum_{i\ge k} g_{i,2n-1},\ g_{k,2n+1}=\sum_{i\le k} g_{i,2n}.
\]
The numbers $g_{n,2n-1}$ are called the Genocchi numbers of the first kind and the numbers
$H_{2n-1}=g_{1,2n}$ are called the Genocchi numbers of the second kind (or the median Genocchi numbers).
Barsky \cite{B} and then Dumont \cite{Du} proved that the number $H_{2n+1}$ is divisible by
$2^n$. The normalized median Genocchi numbers $h_n$ are defined as the corresponding ratios:
$h_{n}=H_{2n+1}/2^{n}$.

\subsection{Dellac's configurations}\label{Dc}
The earliest definition was given by Dellac in \cite{De}. Consider a rectangle with $n$ columns and
$2n$ rows. It contains $n\times 2n$ boxes labeled by pairs $(l,j)$, where $l=1,\dots,n$ is the number
of a column and $j=1,\dots,2n$ is the number of  a row. A Dellac  configuration $D$ is a subset of boxes,
subject to the following conditions:
\begin{itemize}
\item each column contains exactly two boxes from $D$,
\item each row contains exactly one box from $D$,
\item if the $(l,j)$-th box is in $D$, then $l\le j\le n+l$.
\end{itemize}
Let $DC_n$ be the set of such configurations. Then the number of elements in $DC_n$ is equal to $h_n$.

We list all Dellac's configurations for $n=3$.
We specify boxes in a configuration by putting  fat dots inside.

\begin{equation*}
\begin{picture}(30,60)
\put(0,0){\line(1,0){30}}
\put(0,10){\line(1,0){30}}
\put(0,20){\line(1,0){30}}
\put(0,30){\line(1,0){30}}
\put(0,40){\line(1,0){30}}
\put(0,50){\line(1,0){30}}
\put(0,60){\line(1,0){30}}

\put(0,0){\line(0,1){60}}
\put(10,0){\line(0,1){60}}
\put(20,0){\line(0,1){60}}
\put(30,0){\line(0,1){60}}

\put(2,2){$\bullet$}
\put(2,12){$\bullet$}
\put(12,22){$\bullet$}
\put(12,32){$\bullet$}
\put(22,42){$\bullet$}
\put(22,52){$\bullet$}
\end{picture} \quad
\begin{picture}(30,60)
\put(0,0){\line(1,0){30}}
\put(0,10){\line(1,0){30}}
\put(0,20){\line(1,0){30}}
\put(0,30){\line(1,0){30}}
\put(0,40){\line(1,0){30}}
\put(0,50){\line(1,0){30}}
\put(0,60){\line(1,0){30}}

\put(0,0){\line(0,1){60}}
\put(10,0){\line(0,1){60}}
\put(20,0){\line(0,1){60}}
\put(30,0){\line(0,1){60}}

\put(2,2){$\bullet$}
\put(2,12){$\bullet$}
\put(12,22){$\bullet$}
\put(12,42){$\bullet$}
\put(22,32){$\bullet$}
\put(22,52){$\bullet$}
\end{picture} \quad
\begin{picture}(30,60)
\put(0,0){\line(1,0){30}}
\put(0,10){\line(1,0){30}}
\put(0,20){\line(1,0){30}}
\put(0,30){\line(1,0){30}}
\put(0,40){\line(1,0){30}}
\put(0,50){\line(1,0){30}}
\put(0,60){\line(1,0){30}}

\put(0,0){\line(0,1){60}}
\put(10,0){\line(0,1){60}}
\put(20,0){\line(0,1){60}}
\put(30,0){\line(0,1){60}}

\put(2,2){$\bullet$}
\put(2,12){$\bullet$}
\put(12,32){$\bullet$}
\put(12,42){$\bullet$}
\put(22,22){$\bullet$}
\put(22,52){$\bullet$}
\end{picture} \quad
\begin{picture}(30,60)
\put(0,0){\line(1,0){30}}
\put(0,10){\line(1,0){30}}
\put(0,20){\line(1,0){30}}
\put(0,30){\line(1,0){30}}
\put(0,40){\line(1,0){30}}
\put(0,50){\line(1,0){30}}
\put(0,60){\line(1,0){30}}

\put(0,0){\line(0,1){60}}
\put(10,0){\line(0,1){60}}
\put(20,0){\line(0,1){60}}
\put(30,0){\line(0,1){60}}

\put(2,2){$\bullet$}
\put(2,22){$\bullet$}
\put(12,12){$\bullet$}
\put(12,32){$\bullet$}
\put(22,42){$\bullet$}
\put(22,52){$\bullet$}
\end{picture}\quad
\begin{picture}(30,60)
\put(0,0){\line(1,0){30}}
\put(0,10){\line(1,0){30}}
\put(0,20){\line(1,0){30}}
\put(0,30){\line(1,0){30}}
\put(0,40){\line(1,0){30}}
\put(0,50){\line(1,0){30}}
\put(0,60){\line(1,0){30}}

\put(0,0){\line(0,1){60}}
\put(10,0){\line(0,1){60}}
\put(20,0){\line(0,1){60}}
\put(30,0){\line(0,1){60}}

\put(2,2){$\bullet$}
\put(2,22){$\bullet$}
\put(12,12){$\bullet$}
\put(12,42){$\bullet$}
\put(22,32){$\bullet$}
\put(22,52){$\bullet$}
\end{picture}\quad
\begin{picture}(30,60)
\put(0,0){\line(1,0){30}}
\put(0,10){\line(1,0){30}}
\put(0,20){\line(1,0){30}}
\put(0,30){\line(1,0){30}}
\put(0,40){\line(1,0){30}}
\put(0,50){\line(1,0){30}}
\put(0,60){\line(1,0){30}}

\put(0,0){\line(0,1){60}}
\put(10,0){\line(0,1){60}}
\put(20,0){\line(0,1){60}}
\put(30,0){\line(0,1){60}}

\put(2,2){$\bullet$}
\put(2,32){$\bullet$}
\put(12,12){$\bullet$}
\put(12,22){$\bullet$}
\put(22,42){$\bullet$}
\put(22,52){$\bullet$}
\end{picture}\quad
\begin{picture}(30,60)
\put(0,0){\line(1,0){30}}
\put(0,10){\line(1,0){30}}
\put(0,20){\line(1,0){30}}
\put(0,30){\line(1,0){30}}
\put(0,40){\line(1,0){30}}
\put(0,50){\line(1,0){30}}
\put(0,60){\line(1,0){30}}

\put(0,0){\line(0,1){60}}
\put(10,0){\line(0,1){60}}
\put(20,0){\line(0,1){60}}
\put(30,0){\line(0,1){60}}

\put(2,2){$\bullet$}
\put(2,32){$\bullet$}
\put(12,12){$\bullet$}
\put(12,42){$\bullet$}
\put(22,22){$\bullet$}
\put(22,52){$\bullet$}
\end{picture}\ .
\end{equation*}

\subsection{Permutations}
In \cite{K} Kreweras suggested another description of the numbers $h_n$. Namely, a permutation
$\sigma\in S_{2n+2}$ is called a normalized Dumont permutation of the second kind if the following conditions
are satisfied:
\begin{itemize}
 \item $\sigma(k)<k$ if $k$ is even,
\item $\sigma(k)>k$ if $k$ is odd,
\item $\sigma^{-1}(2k)<\sigma^{-1}(2k+1)$ for $k=1,\dots,n$.
\end{itemize}
 According to Kreweras, the number of such permutations is equal to $h_n$ 
(see also \cite{Fe2},  Proposition 3.3).

\subsection{A la $n!$}
Let ${\bf I}=(I_1,I_2,\dots,I_{n-1})$ be a sequence of subsets of the set $\{1,\dots,n\}$.
We cal such a sequence admissible if $\# I_l=l$ for all $l$ and
\begin{equation}\label{l+1}
I_l\subset I_{l+1}\cup \{l+1\},\ l=1,\dots,n-2.
\end{equation}
Then the number of admissible sequences is equal to the normalized median Genocchi number $h_n$.   

\begin{rem}
If we replace \eqref{l+1} with $I_l\subset I_{l+1}$, then the number of admissible sequences will obviously
become $n!$. 
\end{rem}

Admissible sequences can be visualized as follows. 
Consider an oriented graph $\Gamma$ with the set of vertices
$(l,j)$,  $1\le l\le n-1$, $1\le j\le n$. Two vertices $(l_1,j_1)$ and $(l_2,j_2)$ are connected by an arrow
$(l_1,j_1)\to (l_2,j_2)$ if and only if $j_1=j_2$, $l_2=l_1+1$, $l_2\ne j_2$ (note that in \cite{CFR} this graph
appeared as a coefficient quiver). 
For $n=5$ the graph is as follows:
\[
\xymatrix@R=6pt@C=8pt
{
(1,5)\ar[r]&(2,5)\ar[r]&(3,5)\ar[r]&(4,5)&\\
(1,4)\ar[r]&(2,4)\ar[r]&(3,4)&(4,4)&\\
(1,3)\ar[r]&(2,3)&(3,3)\ar[r]&(4,3)&\\
(1,2)&(2,2)\ar[r]&(3,2)\ar[r]&(4,2)&\\
(1,1)\ar[r]&(2,1)\ar[r]&(3,1)\ar[r]&(4,1)&\\
}
\] 
To a collection ${\bf I}=(I_1,\dots,I_{n-1})$ with $\#I_l=l$ for all $l$ we associate a subset $S_{\bf I}$ of
vertices of $\Gamma$ by the formula $$S_{\bf I}=\{(l,j):\ j\in I_l\}.$$
Then ${\bf I}$ is admissible if and only if $S_{\bf I}$ is closed in $\Gamma$, i.e. if $p\in S_{\bf I}$
and $p\to q$ is an arrow in $\Gamma$ then $q\in S_{\bf I}$.

\subsection{Euler characteristic.} 
The admissible sequences label the cells in the degenerate flag varieties $\Fl^a_n$ (see \cite{Fe2}). 
Recall that
$\Fl^a_n$ consists of sequences $(V_1,\dots,V_{n-1})$ of the subspaces of a given $n$-dimensional 
space $W$ subject to the conditions given below. Let $w_1,\dots,w_n$ be a basis of $W$ and let 
$pr_k:W\to W$ be the projection along $w_k$ to $\spa(w_i: i\ne k)$: 
$pr_k (\sum_{i=1}^n c_i w_i)=\sum_{i\ne k} c_i w_i$. 
\begin{dfn}
A collection $(V_1,\dots,V_{n-1})$ of subspaces $V_k\subset W$ belongs to $\Fl^a_n$ if and only if
$\dim V_k=k$ for all $k$ and
\[
pr_{k+1} V_k\subset V_{k+1},\ k=1,\dots,n-2.
\]
\end{dfn}
We recall (see e.g. \cite{Fu}) that the classical flag varieties $\Fl_n$ consist of collections 
$(V_1\subset\dots\subset V_{n-1})$ of sequentially embedded subspaces of $W$ with $\dim V_k=k$ for all $k$.
These varieties are acted upon by a torus $T=(\bC^*)^n$. The action is induced from the
natural action of $T$ on $W$: 
\[(a_1,\dots,a_n)\cdot (c_1w_1+\dots +c_nw_n)=a_1c_1w_1+\dots +a_nc_nw_n.\] 
The varieties $\Fl_n$ enjoy several important properties:
\begin{enumerate}
\item \label{i}
$\Fl_n$ can be decomposed into a disjoint union of complex (even-dimensional real) cells. Each cell
contains exactly one $T$-fixed point.
\item \label{ii}
The $T$-fixed points on $\Fl_n$ are labeled by permutations from $S_n$. The fixed  point $p(\sigma)$ attached to
$\sigma\in S_n$ is given by
\[
p(\sigma)=(\spa(w_{\sigma(1)}), \spa(w_{\sigma(1)},w_{\sigma(2)}),\dots, \spa(w_{\sigma(1)},\dots,w_{\sigma(n-1)})). 
\]
\item \label{iii}
The (real) dimension of the cell containing $p(\sigma)$ is equal to $2l(\sigma)$ 
(twice length of the permutation). 
\end{enumerate}
In particular, the Euler characteristic of $\Fl_n$ is equal to $n!$. 
The degenerate flag varieties $\Fl^a_n$ share property \eqref{i}.   
However, the labeling set for the $T$-fixed points is different: the number of torus fixed points 
in $\Fl^a_n$ is equal to the number of admissible
sequences. Namely, an admissible sequence ${\bf I}$ defines a point 
\begin{equation}\label{pI}
p({\bf I})=(V_1,\dots,V_{n-1}),\ V_k=\spa(w_i: i\in I_k).
\end{equation}  
Any such point belongs to $\Fl^a_n$, is $T$-fixed and all $T$-fixed points in $\Fl^a_n$ are of this form.
\begin{cor}
The Euler characteristic of $\Fl^a_n$ is equal to $h_n$:
\[
\chi(\Fl^a_n)=h_n.
\]
\end{cor}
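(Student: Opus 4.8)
The plan is to obtain the corollary formally from the affine cell decomposition of $\Fl^a_n$ together with the description of its $T$-fixed points; the substantive input is the count of admissible sequences already recorded above. First I would invoke property \eqref{i}, which $\Fl^a_n$ shares with the classical flag variety: $\Fl^a_n=\bigsqcup_\alpha C_\alpha$ is a finite disjoint union of locally closed cells, each $C_\alpha$ isomorphic to an affine space $\bC^{d_\alpha}$ and containing exactly one $T$-fixed point. By additivity of the compactly supported Euler characteristic over an algebraic stratification (and its coincidence with the topological Euler characteristic for complex varieties), one gets
\[
\chi(\Fl^a_n)=\sum_\alpha\chi(\bC^{d_\alpha})=\#\{\text{cells}\}.
\]
Equivalently, an affine paving by complex cells forces $H_*(\Fl^a_n;\bZ)$ to be free and concentrated in even degrees, with $\dim H_{2k}(\Fl^a_n)$ equal to the number of cells of complex dimension $k$; since there are no odd cells, the alternating sum collapses to the total number of cells, and this is also why $P_{\Fl^a_n}(t)$ is a function of $t^2$.

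Next I would identify the number of cells with a combinatorial quantity. Since each cell carries exactly one $T$-fixed point, $\#\{\text{cells}\}$ equals the number of $T$-fixed points of $\Fl^a_n$. By the discussion preceding the corollary, $\mathbf{I}\mapsto p(\mathbf{I})$ as in \eqref{pI} is a bijection from the set of admissible sequences onto the set of $T$-fixed points of $\Fl^a_n$: each $p(\mathbf{I})$ is spanned by coordinate vectors and hence $T$-fixed, and it lies in $\Fl^a_n$ precisely because the condition $pr_{k+1}V_k\subset V_{k+1}$ on the subspaces $V_k=\spa(w_i:i\in I_k)$ is equivalent to \eqref{l+1}; conversely, every $T$-fixed flag consists of coordinate subspaces and so arises in this way. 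Therefore $\#\{\text{cells}\}$ equals the number of admissible sequences, which by the ``A la $n!$'' description recalled above equals $h_n$. Chaining the three equalities yields $\chi(\Fl^a_n)=h_n$.

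I do not expect a genuine obstacle: every nontrivial ingredient — the affine paving property \eqref{i} for $\Fl^a_n$ (borrowed from \cite{Fe2}), the bijection between admissible sequences and $T$-fixed points, and the combinatorial identity ``$\#\{\text{admissible sequences}\}=h_n$'' — has been stated in the text and may be used as a black box here. The only point worth handling carefully is that the cells are \emph{affine} spaces, so that the alternating sum of Betti numbers degenerates to the plain cell count; this is taken care of either by the compactly supported Euler characteristic as above, or by the standard fact that an affine paving is a perfect cell structure.
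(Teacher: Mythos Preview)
Your proposal is correct and is exactly the argument the paper intends: the corollary is stated without proof because it follows immediately from the preceding paragraph, namely property \eqref{i} for $\Fl^a_n$, the bijection $\mathbf{I}\mapsto p(\mathbf{I})$ between admissible sequences and $T$-fixed points, and the ``A la $n!$'' count of admissible sequences. You have simply made explicit the standard step that an affine paving forces $\chi$ to equal the number of cells.
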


\subsection{Two triangles}
We add one more combinatorial description of the numbers $h_n$ obtained in \cite{CFR} using the representation
theory of quivers.
\begin{prop}
The normalized median Genocchi number $h_{n+1}$ is equal to the number of pairs of collections of non-negative
integers $(r_{i,j})$, $(m_{i,j})$, $1\le i\le j\le n$ subject to the following conditions
for all $k=1,\dots,n$:
$$
\sum_{i=k}^n r_{k,i}\le 1, \quad \sum_{j=1}^k m_{j,k}\le 1,\quad
\sum_{i\le k\le j} r_{i,j}=\sum_{i\le k\le j} m_{i,j}.
$$
\end{prop}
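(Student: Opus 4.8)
The plan is to set up a bijection between the admissible pairs $((r_{i,j}),(m_{i,j}))$ of the statement and the torus fixed points of $\Fl^a_{n+1}$. By the ``A la $n!$'' subsection applied to the set $\{1,\dots,n+1\}$, these fixed points correspond to the admissible sequences $(I_1,\dots,I_n)$ of subsets of $\{1,\dots,n+1\}$, and hence --- via the map $\mathbf I\mapsto S_{\mathbf I}$ of that subsection --- to the closed subsets $S$ of the coefficient quiver $\Gamma$, now taken with vertices $(l,j)$, $1\le l\le n$, $1\le j\le n+1$, subject to $\#\{j:(l,j)\in S\}=l$ for every $l$. There are $h_{n+1}$ of these, so it is enough to match them with the triangles.

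First I would describe $\Gamma$ explicitly. For fixed $j$ the full subquiver on $\{(1,j),\dots,(n,j)\}$ is a directed path from which the single arrow $(j-1,j)\to(j,j)$ has been removed; thus for $2\le j\le n$ it splits into a directed path $L_j$ on $\{(1,j),\dots,(j-1,j)\}$ and a directed path $R_j$ on $\{(j,j),\dots,(n,j)\}$, whereas for $j=1$ only $R_1$ (the whole path) is present and for $j=n+1$ only $L_{n+1}$. A subset is closed exactly when its trace on each $L_j$ and each $R_j$ is a final segment of that path; so a closed $S$ amounts to a choice of threshold $a_j\in\{1,\dots,j\}$ for every $L_j$ (with $S\cap L_j=\{(l,j):l\ge a_j\}$ and $a_j=j$ meaning the trace is empty) and a threshold $b_j\in\{j,\dots,n+1\}$ for every $R_j$ (with $S\cap R_j=\{(l,j):l\ge b_j\}$ and $b_j=n+1$ meaning empty). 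I would then encode these thresholds: for $j\in\{1,\dots,n\}$ record $b_j$ in the $j$-th row of $r$ by setting $r_{j,\,b_j-1}=1$ if $b_j\ge j+1$ and leaving the row zero if $b_j=j$; for $j\in\{2,\dots,n+1\}$ record $a_j$ in the $(j-1)$-st column of $m$ by setting $m_{a_j,\,j-1}=1$ if $a_j\le j-1$ and leaving the column zero if $a_j=j$. These two assignments are visibly bijections onto the collections of non-negative integers having at most one $1$ in each row of $r$ and at most one $1$ in each column of $m$ --- precisely the first two conditions $\sum_{i=k}^{n}r_{k,i}\le 1$ and $\sum_{j=1}^{k}m_{j,k}\le 1$.

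It remains to interpret the requirement $\#\{j:(l,j)\in S\}=l$. Of the $n+1$ vertices of $\Gamma$ at a fixed level $l$, the $l$ vertices $(l,j)$ with $j\le l$ lie in the paths $R_j$ and the $n+1-l$ with $j>l$ lie in the paths $L_j$; hence that requirement is equivalent to
\[
\#\{j\le l:(l,j)\notin S\}=\#\{j>l:(l,j)\in S\}.
\]
The left-hand side counts the columns $j$ for which $l$ is a ``hole'' of $R_j$, i.e. $j\le l<b_j$, which by the encoding above is $\sum_{i\le l\le j}r_{i,j}$; the right-hand side counts the columns $j$ for which $l$ belongs to the part of $L_j$ inside $S$, i.e. $a_j\le l<j$, which by the encoding is $\sum_{i\le l\le j}m_{i,j}$. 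Thus $S$ is admissible if and only if $\sum_{i\le k\le j}r_{i,j}=\sum_{i\le k\le j}m_{i,j}$ for all $k$ (take $k=l$), which is the third condition. Composing the three bijections proves the claim; this reproves, by an explicit bijection, the count obtained in \cite{CFR} via quiver Grassmannians, the pair of triangles being the dimension data of the coordinate subrepresentation attached to the cell.

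The work is entirely in the bookkeeping, and that is where I expect the only friction: keeping track of the one-step shift between the $j$-th column of $m$ and the path $L_{j+1}$, handling the two extremal columns $1$ and $n+1$ (which each carry only one of the two paths), and checking carefully that the threshold encodings of the second paragraph really are bijective with the correct index ranges. Granting that, each of the three inequalities/identities in the Proposition is a direct transcription of, respectively, ``$S\cap R_j$ is a final segment'', ``$S\cap L_j$ is a final segment'', and ``$\dim V_l=l$''.
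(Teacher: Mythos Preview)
Your argument is correct. The paper does not prove this proposition at all: it merely quotes the result from \cite{CFR}, where it is obtained via the representation theory of quivers (the torus-fixed points of $\Fl^a_{n+1}$ are identified with coordinate subrepresentations of a fixed representation, and the triangles $(r_{i,j}),(m_{i,j})$ are the multiplicities of the indecomposable summands of the two pieces). Your bijection is precisely the combinatorial shadow of that argument---the thresholds $a_j,b_j$ record the endpoints of the interval summands---but carried out entirely in the elementary language of closed subsets of $\Gamma$ already set up in the ``\`a la $n!$'' subsection, with no representation theory invoked. The bookkeeping (the shift $j\mapsto j-1$ for the $m$-columns, the boundary cases $j=1$ and $j=n+1$, and the fact that the first two inequalities force all entries to lie in $\{0,1\}$) all checks out, and the translation of $\#\{j:(l,j)\in S\}=l$ into the third identity is clean. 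So your route is not genuinely different from \cite{CFR}'s, but it is a self-contained proof where the present paper offers none.
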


\subsection{Explicit formula}
The  following explicit formula for the numbers $h_n$ is available (see \cite{CFR}) 
\begin{equation}
h_{n}=\sum_{f_0,\dots,f_n\ge 0} \prod_{k=1}^{n-1} \binom{1+f_{k-1}}{f_k}
\prod_{k=1}^{n} \binom{1+f_{k+1}}{f_k}
\end{equation}
with $f_0=f_n=0$.
This formula can be rewritten as a weighted sum of Motzkin paths. Namely,
let $M_n$ be the set of Motzkin paths starting
at $(0,0)$ and ending at $(n,0)$.
For a Motzkin path $\bff=(f_0,\dots,f_n)\in M_{n}$ with $f_0=f_n=0$ let $l(\bff)$
be the number of "rises" ($f_{i+1}=f_i+1$) plus the number of "falls" ($f_{i+1}=f_i-1$). Then
we obtain
\begin{equation}\label{Mp}
h_n=\sum_{\bff\in M_n} \frac{\prod_{k=0}^{n} (1+f_k)^2}{2^{l(\bff)}}.
\end{equation}

\subsection{The Viennot formula.}
In section \ref{cf} we use the $q$-version of the formula \eqref{Mp} to derive the continued fraction presentation 
of the generating function of $q$-Genocchi numbers. We close this section with the continued fraction form
of the generating function of the (non-normalized) median Genocchi numbers $H_{2n-1}$ 
(see subsection \ref{St}) due to Viennot  (\cite{V1}, \cite{V3}, \cite{Du}, \cite{DZ}):
\[
1+\sum_{n=1}^\infty H_{2n-1}x^n=
\cfrac{1}{1-\cfrac{1^2x}{1-\cfrac{1^2x}{1-\cfrac{2^2x}{1-\cfrac{2^2x}{\dots}}}}}
\]

\section{q-versions}
Several $q$-versions of the median Genocchi numbers can be found in the literature 
(see \cite{HZ1}, \cite{HZ1}, \cite{ZZ}). 
We briefly recall the definition of the Han-Zeng polynomials below.
In our approach the normalized median Genocchi numbers appear as the Euler 
characteristics of the degenerate flag varieties. Thus we obtain a natural $q$-analogue defined by the Poincar\'e
polynomials of $\Fl^a_n$. We give combinatorial description as well as an explicit formula for these polynomials
below. 

\subsection{The Han-Zeng polynomials}
Consider the polynomials $C_n(x,q)$ in two variables defined by $C_1(x,q)=1$ and
\[
C_n(x,q)=(1 + qx)\frac{(1 + qx)C_{n-1}(1 + qx,q) - xC_{n-1}(x,q)}{1+qx-x},\ n\ge 2.
\]
Define 
\[
\bar c_n(q) = \frac{C_n(1,q)}{(1 + q)^{n-1}},\ n\ge 1,   
\]
Han and Zeng proved that these are polynomials satisfying $\bar c_n(1)=h_{n-1}$ 
(see formula $(17)$ in \cite{HZ1}). Hence $\bar c_n(q)$ can be viewed as 
$q$-analogues of the normalized median Genocchi numbers.

\subsection{Statistic on the Dellac configurations.}
For a Dellac configuration
$D\in DC_n$ (see subsection \ref{Dc}) 
we define the length $l(D)$ of $D$ as the number of pairs $(l_1,j_1)$, $(l_2,j_2)$ such that
the boxes $(l_1,j_1)$ and $(l_2,j_2)$ are both in $D$ and $l_1<l_2$, $j_1>j_2$.
This definition resembles the definition of the length of a permutation.
We note that in the classical case the complex dimension of the cell attached to  a permutation $\sigma$ in a
flag variety is equal to the number of pairs $j_1<j_2$ such that $\sigma (j_1)>\sigma(j_2)$,
which equals to the length of $\sigma$ (see property \eqref{iii} of $\Fl_n$).
\begin{prop}
The real dimension of the cell in $\Fl^a_n$ containing a point $p({\bf I})$ \eqref{pI} is equal to $2l(D)$. 
Thus the Poincar\' e polynomial $h_n(t)=P_{\Fl^a_n}(t)$ is given by
$h_n(t)=\sum_{D\in DC_n} t^{2l(D)}.$
\end{prop}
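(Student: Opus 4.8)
The statement has two halves: (a) that the affine cell of $\Fl^a_n$ containing a torus fixed point $p({\bf I})$ has complex dimension $l(D_{\bf I})$, where $D_{\bf I}\in DC_n$ is the Dellac configuration attached to the admissible sequence ${\bf I}$; and (b) that, once (a) is known, $h_n(t)=\sum_{D\in DC_n}t^{2l(D)}$. Part (b) is immediate: by property \eqref{i} the space $\Fl^a_n$ admits an affine paving, so its cohomology is free, concentrated in even degrees, with $\dim H^{2d}(\Fl^a_n)$ equal to the number of $d$-dimensional cells; hence $h_n(t)=P_{\Fl^a_n}(t)=\sum_{\text{cells }C}t^{2\dim_{\bC}C}$, and summing over the cells $C({\bf I})$ (which are indexed by admissible sequences) turns this into $\sum_{\bf I}t^{2\dim_{\bC}C({\bf I})}$. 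So the whole content is the dimension formula (a) for a single cell.

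To prove (a) I fix ${\bf I}=(I_1,\dots,I_{n-1})$ and write $p({\bf I})=(V_1^0,\dots,V_{n-1}^0)$ with $V_k^0=\spa(w_i:i\in I_k)$. In the standard affine chart of $\prod_k\mathrm{Gr}(k,W)$ around $p({\bf I})$ a nearby collection $(V_1,\dots,V_{n-1})$ is the family of graphs of linear maps $\phi_k\colon V_k^0\to\spa(w_j:j\notin I_k)$, recorded by coordinates $x^{(k)}_{j,i}$ ($i\in I_k$, $j\notin I_k$), on which $T$ acts with weight $\varepsilon_j-\varepsilon_i$, where $\varepsilon_m$ is the weight of $w_m$. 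By \cite{Fe2} one may take $C({\bf I})$ to be the attracting set $\{x:\lim_{t\to0}\lambda(t)\cdot x=p({\bf I})\}$ for a suitably generic cocharacter $\lambda\colon\bC^*\to T$, and it is an affine space. Inside the chart, $x\in C({\bf I})$ exactly when (i) the coordinates of nonpositive $\lambda$-weight vanish, forcing $x^{(k)}_{j,i}=0$ unless $j$ precedes $i$ in the $\lambda$-order; and (ii) the degenerate incidence relations $pr_{k+1}V_k\subset V_{k+1}$ hold. Writing (ii) out, for each $k$ and each $i\in I_k$ with $i\ne k+1$ the comparison of the $w_j$-components of $pr_{k+1}\phi_k(w_i)$ with those prescribed by $V_{k+1}$ gives a polynomial relation; since these relations are triangular in $k$, they can be solved to express a subset of the surviving $x^{(k)}_{j,i}$ through the remaining ones and their products. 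This exhibits $C({\bf I})$ as an affine space whose coordinates form a distinguished subset $F({\bf I})$ of the surviving triples $(k,i,j)$, so $\dim_{\bC}C({\bf I})=\#F({\bf I})$.

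It remains to show $\#F({\bf I})=l(D_{\bf I})$ under the bijection between admissible sequences and Dellac configurations (the two sets being identified in \cite{Fe2}). I would make the bijection explicit --- most naturally by reading the Dellac rectangle off the oriented graph $\Gamma$ introduced above, so that the two marked boxes in column $l$ record the two ``times'' at which the index $l$ enters the sequence $(I_1,\dots,I_{n-1})$, extended by $I_0=\emptyset$ and $I_n=\{1,\dots,n\}$: the condition $I_{l-1}\subset I_l\cup\{l\}$ is precisely what forces an index to leave the sequence at most once, and only at its own step, so that every index has the two entry events matching its two boxes, while the Dellac inequalities $l\le j\le n+l$ come out as the resulting range restrictions. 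With this dictionary, one checks that each free coordinate $(k,i,j)\in F({\bf I})$ corresponds to exactly one inversion of $D_{\bf I}$, i.e. one pair of boxes $(l_1,j_1)$, $(l_2,j_2)$ in $D_{\bf I}$ with $l_1<l_2$ and $j_1>j_2$, and conversely. Summing $t^{2\#F({\bf I})}=t^{2l(D_{\bf I})}$ over all admissible ${\bf I}$ then yields the asserted expression for $h_n(t)$.

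The hard part is carrying out step (ii): tracking precisely how the modified incidence relations $pr_{k+1}V_k\subset V_{k+1}$ cut down the naive set of ``$j$ before $i$'' coordinates produced by (i). In the classical flag variety the Schubert cell is carved out by vanishing alone, with no relations among the coordinates that remain; here consecutive levels are genuinely coupled, and it is the exceptional role of the index $k+1$ (killed by $pr_{k+1}$) that turns the factorial count into the Genocchi count. If instead one works through the quiver Grassmannian realization of $\Fl^a_n$ (\cite{Fe1},\cite{CFR}), the same difficulty reappears as the task of evaluating the Cerulli Irelli--Feigin--Reineke cell-dimension formula on the coefficient quiver and recognizing its value as $l(D)$. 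Once the dimension formula is pinned down, the identification with the Dellac length is a finite combinatorial check, which I would run through first for small $n$ in order to fix all conventions --- the direction of $\lambda$ and the ordering of the two boxes within a column.
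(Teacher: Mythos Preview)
The paper does not actually prove this proposition: it is stated without proof as part of the review portion of the article, with the result attributed to \cite{Fe2}. So there is no in-paper argument to compare against; the relevant benchmark is the proof in \cite{Fe2}.

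Your outline follows the natural strategy and is broadly aligned with \cite{Fe2}: describe the cell through a torus cocharacter as a Bia\l{}ynicki--Birula attracting set, parametrize a nearby flag by matrices $(x^{(k)}_{j,i})$, impose the degenerate incidences $pr_{k+1}V_k\subset V_{k+1}$, and match the surviving free coordinates with inversions of the associated Dellac configuration. However, what you have written is a plan, not a proof. You yourself flag that ``the hard part is carrying out step~(ii)'' and that the equality $\#F({\bf I})=l(D_{\bf I})$ is ``a finite combinatorial check'' you \emph{would} perform---but you perform neither. All of the content of the proposition resides exactly in those two omitted computations: solving the incidence relations to identify which $x^{(k)}_{j,i}$ are free, and then exhibiting the bijection with inversion pairs. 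Asserting that the relations are ``triangular in $k$'' and ``can be solved'' is not enough; one must say which coordinates are eliminated and which survive.

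There is also a concrete weakness in your description of the bijection between admissible sequences and Dellac configurations. You propose that the two marked boxes in column $l$ record ``the two times at which the index $l$ enters the sequence $(I_1,\dots,I_{n-1})$.'' But an index need not enter twice (it may enter once and never leave), and your derivation of the Dellac constraint $l\le j\le n+l$ from this picture is only gestured at. Without a precise, verified bijection you cannot even state the claimed correspondence between free coordinates $(k,i,j)\in F({\bf I})$ and inversion pairs in $D_{\bf I}$, let alone check it. In short: the architecture is right, but the load-bearing calculations are absent.
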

Since the polynomials $h_n(t)$ do not contain odd powers of $t$, it is convenient to introduce a new variable
$q=t^2$ and define $h_n(q)=P_{\Fl^a_n}(q^{1/2})$. 
The first four polynomials $h_n(q)$ are as follows:
\begin{gather*}
h_1(q)=1, \qquad h_2(q)=1+q,\\
h_3(q)=1+ 2q+ 3q^2+q^3,\\
h_4(q)=1+3q+7q^2+10q^3+10q^4+6q^{5}+q^{6}.
\end{gather*}
In general, the degree of $h_n(q)$ is equal to $n(n-1)/2$. Obviously, $h_n(1)=h_n$.

\subsection{Explicit formula}
Let $m\ge n\ge 0$. Then the $q$-binomial (Gaussian) coefficient $\qbinom{m}{n}$ is defined as
\[
\qbinom{m}{n}=\frac{m_q!}{n_q!(m-n)_q!},\ m_q!=\prod_{i=1}^m\frac{1-q^i}{1-q}.
\]
The following formula is obtained in \cite{CFR} using the geometry of quiver Grassmannians.
\begin{prop}
The Poincar\'e polynomial of the degenerate flag variety $\Fl^a_{n}$ is equal to
\begin{equation}\label{qG}
\sum_{f_1,\dots,f_{n-1}\ge 0}q^{\sum_{k=1}^{n-1} (k-f_k)(1-f_k+f_{k+1})}
\prod_{k=1}^{n-1} \qbinom{1+f_{k-1}}{f_k}
\prod_{k=1}^{n-1} \qbinom{1+f_{k+1}}{f_k},
\end{equation}
(we assume $f_0=f_n=0$).
\end{prop}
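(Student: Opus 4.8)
The plan is to obtain \eqref{qG} from the quiver-Grassmannian model of $\Fl^a_n$, following \cite{CFR}. First I would recall that $\Fl^a_n$ is isomorphic to a quiver Grassmannian $\mathrm{Gr}_{\mathbf e}(M)$ for an equioriented quiver $Q$ of type $A$, where $M=P\oplus I$ is a direct sum of a projective and an injective $Q$-representation and $\mathbf e$ is the forced dimension vector; since $M$ is rigid, $\mathrm{Gr}_{\mathbf e}(M)$ is smooth of dimension $\langle\mathbf e,\mathbf{dim}\,M-\mathbf e\rangle=n(n-1)/2$. Fix a basis of $M$ compatible with the arrows, i.e. fix the coefficient quiver $\Gamma$. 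There is a cocharacter $\bC^*\to T$ acting on $M$ through this basis whose Bia\l ynicki--Birula decomposition of $\mathrm{Gr}_{\mathbf e}(M)$ is a cell decomposition into affine spaces $C_L$ indexed by the $T$-fixed points $L$, and the $T$-fixed points are exactly the coordinate subrepresentations, equivalently the successor-closed subsets of $\Gamma$ with dimension vector $\mathbf e$ --- which is precisely the set of admissible sequences, or of Dellac configurations $D\in DC_n$. Hence $P_{\Fl^a_n}(q^{1/2})=\sum_L q^{\dim_{\bC}C_L}$.

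Next I would compute $\dim_{\bC}C_L$. By the Bia\l ynicki--Birula theory $C_L$ is the attracting cell of $L$, and its dimension equals that of the positive-weight part of the tangent space $T_L\mathrm{Gr}_{\mathbf e}(M)\cong\Hom_Q(L,M/L)$ under $\bC^*$. Working out this weight decomposition on the coefficient quiver expresses $\dim_{\bC}C_L$ as the number of suitable ``inversion'' pairs among the vertices and arrows of $\Gamma$; this is exactly the statistic $l(D)$ appearing in the Proposition that gives $h_n(t)=\sum_{D\in DC_n}t^{2l(D)}$. So at this point $P_{\Fl^a_n}(q^{1/2})=\sum_{D\in DC_n}q^{l(D)}$, and the task becomes to reorganize this sum into a product.

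The core of the argument is to group the $T$-fixed points according to a discrete invariant and to show that the generating function factors within each group. For a coordinate subrepresentation $L\subset M=P\oplus I$, let $f_k$ record the dimension, at the vertex corresponding to the $k$-th level, of the part of $L$ meeting the injective summand $I$ (equivalently, the rank of the natural map detecting that $L$ is not contained in $P$ there); then $(f_1,\dots,f_{n-1})$ is a tuple of non-negative integers with $f_0=f_n=0$. Reconstructing $L$ from the value of $(f_k)$ splits into two independent local problems along the two ``halves'' of the type-$A$ quiver: on the projective side, passing from level $k-1$ to level $k$ is the choice of an $f_k$-dimensional coordinate subspace, compatible with the arrow, of a $(1+f_{k-1})$-dimensional coordinate space; on the injective side, passing from level $k+1$ to level $k$ is the analogous choice with parameters $f_{k+1}$ and $f_k$. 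The generating functions (in $q=t^2$) of the cell-dimension contributions of these choices are the Gaussian binomials $\qbinom{1+f_{k-1}}{f_k}$ and $\qbinom{1+f_{k+1}}{f_k}$ --- Grassmannian point counts --- while the remaining part of $\dim_{\bC}C_L$, coming from inversion pairs with one leg on the projective side and one on the injective side, depends only on $(f_k)$ and equals $\sum_{k=1}^{n-1}(k-f_k)(1-f_k+f_{k+1})$. Summing over all admissible $(f_k)$ gives \eqref{qG}.

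The step I expect to be the main obstacle is this last one: proving that the fixed points with a fixed $(f_k)$ really do decouple into the product of the two local problems, and, above all, pinning down the exact ``cross'' exponent $\sum(k-f_k)(1-f_k+f_{k+1})$. This is a careful but finite computation on $\Gamma$: one uses the explicit value of $\mathbf e$ at each vertex forced by $M=P\oplus I$, together with the fact that the inversion pairs counted by $l(D)$ partition cleanly according to which of the two summands each of their two legs comes from. As consistency checks one should verify that the resulting polynomial has degree $n(n-1)/2$ and specializes at $q=1$ to the binomial formula for $h_n$, hence to \eqref{Mp}; one may also compare \eqref{qG} with the tabulated $h_1(q),\dots,h_4(q)$. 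Finally, since by the preceding Proposition the whole statement amounts to the identity $\sum_{D\in DC_n}q^{l(D)}=$ (right-hand side of \eqref{qG}), an alternative is to carry out exactly the same grouping-and-factorization directly on Dellac configurations, dispensing with the quiver-Grassmannian language.
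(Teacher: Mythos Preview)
Your proposal is correct and follows the same approach as the paper: the paper does not give a self-contained proof here but defers to \cite{CFR}, offering only the one-sentence sketch that $\Fl^a_n$ is cut into pieces each of which fibers over a product of Grassmannians with affine fibers, and your outline is precisely an expanded version of that argument (quiver Grassmannian model $M=P\oplus I$, Bia\l ynicki--Birula cells indexed by coordinate subrepresentations, grouping by the invariant $(f_k)$, and factoring the resulting generating function). The only caveat is that the ``main obstacle'' you flag---the exact cross exponent $\sum_k (k-f_k)(1-f_k+f_{k+1})$---is indeed the technical heart, and the paper simply imports it from \cite{CFR} rather than redoing the computation.
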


Such kind of formulas are usually referred to as fermionic: these are sums  of products of
$q$-binomial coefficients multiplied by certain powers of $q$. Geometrically, formula \eqref{qG} appears as
follows. The varieties $\Fl^a_n$ can be cut into disjoint pieces, such that each piece is fibered
over a product of several Grassmannians with fibers being affine spaces. Since the Poincar\'e polynomial
of a Grassmannian is given by a $q$-binomial coefficient, we arrive at the formula as above.

\section{Generating function and continued fraction}\label{cf} 
Our goal in this section is to give an explicit continued fraction form of the generating function
of the Poincar\'e polynomials $h_n(q)$ and to prove that they coincide with the $q$-versions
of the normalized median Genocchi numbers defined in \cite{HZ1}, \cite{HZ2}.

We note that formula \eqref{qG} can be seen as a sum over the set $M_{n}$ of Motzkin paths 
$\bff=(f_0,f_1,\dots,f_n)$ starting at $(0,f_0)=(0,0)$ and ending at $(n,f_n)=(n,0)$.

We first recall the formalism of the weighted generating functions of Motzkin paths due to Flajolet 
(see \cite{Fl}).
Let $\al_n$, $\beta_n$ and $\gamma_n$, $n\ge 0$  be
sequences of complex numbers called weights. For a nonnegative integer $k$ we define 
$w(k,k)=\gamma_k$, $w(k,k+1)=\al_k$ and $w(k,k-1)=\beta_{k-1}$ (if $k\ge 1$). 
We denote by $\al_\bullet$ the whole collection $(\al_k)_{k=0}^\infty$ and similarly for 
$\beta_\bullet$ and $\gamma_\bullet$. The weighted generating function 
of Motzkin paths is given by the formula
\[
F(s;\al_\bullet,\beta_\bullet,\gamma_\bullet)=\sum_{n\ge 0} s^n \sum_{\bff\in M_n} \prod_{k=0}^{n-1} w(f_k,f_{k+1}).
\] 
The following result is due to Flajolet \cite{Fl}.
\begin{thm}\label{Fl}
The weighted generating sum of the Motzkin paths is given by the continued fraction
\[
F(s;\al_\bullet,\beta_\bullet,\gamma_\bullet)=
\cfrac{1}{1-\gamma_0s-\cfrac{\al_0\beta_0s^2}{1-\gamma_1s-\cfrac{\al_1\beta_1s^2}{1-\gamma_2s-\dots}}}.
\]
\end{thm}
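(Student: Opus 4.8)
The plan is to prove the identity by Flajolet's original device: a ``first--return'' decomposition of weighted Motzkin paths, organized so that each application peels off exactly one stage of the continued fraction.

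First I would introduce, for every height $h\ge 0$, the refined weighted generating function $F_h=F_h(s)\in\bC[[s]]$ counting all lattice paths $(f_0,\dots,f_n)$ with up-steps, down-steps and level steps that start and end at height $h$ and never go below height $h$, each path weighted by $s^n\prod_{k=0}^{n-1}w(f_k,f_{k+1})$; the empty path contributes the constant term $1$. By definition $F_0=F(s;\al_\bullet,\beta_\bullet,\gamma_\bullet)$, since the paths in $M_n$ are precisely those staying at height $\ge 0$ and returning to $0$.

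Next I would establish the quadratic functional equation
\[
F_h=1+\gamma_h\, s\, F_h+\al_h\beta_h\, s^2\, F_{h+1}\, F_h,\qquad h\ge 0,
\]
by splitting a nonempty path counted by $F_h$ according to its initial step. Such a path cannot step down first, since it stays at height $\ge h$. If it steps level (weight $\gamma_h$), the remainder is again a path counted by $F_h$. If it steps up (weight $\al_h$), cut it at its first return to height $h$: the portion strictly between the initial up-step and the down-step back to height $h$ (weight $\beta_h$) is exactly a path counted by $F_{h+1}$, and the tail after that down-step is again counted by $F_h$. Collecting the three contributions gives the equation, whence $F_h=\bigl(1-\gamma_h s-\al_h\beta_h s^2 F_{h+1}\bigr)^{-1}$; substituting this relation into itself repeatedly generates the nested fraction on the right-hand side.

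The only delicate point --- and the step I expect to require care --- is interpreting the resulting infinite continued fraction as an element of $\bC[[s]]$ and justifying the passage to the limit. Here I would use the fact that a path of length $n$ never reaches a height exceeding $n$, so the coefficient of $s^n$ in $F_0$ is unchanged if $F_{n+1}$ (or any $F_m$ with $m>n$) is replaced by $1$; consequently the successive finite truncations of the right-hand side stabilize coefficient-by-coefficient and converge to $F_0$ in the $s$-adic topology of $\bC[[s]]$, which is precisely the meaning of the displayed formula. The same bookkeeping shows that every $F_h$ is a well-defined power series with constant term $1$ and that the system of functional equations determines the $F_h$ uniquely, so no convergence or positivity hypotheses on the weight sequences $\al_\bullet,\beta_\bullet,\gamma_\bullet$ are needed.
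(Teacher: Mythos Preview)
Your argument is correct and is in fact Flajolet's original ``first--return'' proof. The paper, however, does not give its own proof of this theorem at all: it states the result as due to Flajolet \cite{Fl} and uses it as a black box, so there is nothing to compare against. Your write-up supplies the standard proof that the paper omits.
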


Let us apply this formalism to our situation. 
Formula \eqref{qG} can be rewritten as follows.
\begin{equation}\label{n(n-1)}
h_n(q)=q^{n(n-1)/2} \sum_{\bff\in M_n} q^{\sum_{k=1}^{n-1} f_k(f_k-f_{k+1}-2)} 
\prod_{k=1}^{n-1} \qbinom{1+f_{k-1}}{f_k}
\prod_{k=1}^{n-1} \qbinom{1+f_{k+1}}{f_k}.  
\end{equation}

We introduce three sequences of weights
\begin{equation*}
\al_m(q)=q^{-3m}\qbinom{m+2}{2},\ \beta_m(q)=q^{-m-1}\qbinom{m+2}{2},\ \gamma_m(q)=q^{-2m}\qbinom{m+1}{1}^2
\end{equation*}
and define $w(f_k,f_{k+1})$ using these weights. 
Then formula \eqref{n(n-1)} implies the following lemma.
\begin{lem}\label{L-1} 
\begin{equation}\label{-1}
q^{-n(n-1)/2}h_n(q)=\sum_{\bff\in M_n} \prod_{k=0}^{n-1} w(f_k,f_{k+1}).
\end{equation}
\end{lem}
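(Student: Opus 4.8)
The plan is to massage the explicit formula \eqref{n(n-1)} so that the exponent of $q$ and the product of $q$-binomials factor through the weight function $w(f_k,f_{k+1})$ built from $\al_m(q)$, $\be_m(q)$, $\gamma_m(q)$. Concretely, I would start from the right-hand side of \eqref{-1}, $\sum_{\bff\in M_n}\prod_{k=0}^{n-1}w(f_k,f_{k+1})$, and split the product over steps of the Motzkin path into rises (where $f_{k+1}=f_k+1$, contributing $\al_{f_k}$), falls (where $f_{k+1}=f_k-1$, contributing $\be_{f_k-1}=\be_{f_{k+1}}$) and level steps (where $f_{k+1}=f_k$, contributing $\gamma_{f_k}$). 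The $q$-binomial content of each such factor must be checked to agree with what \eqref{n(n-1)} assigns to the same step: a rise at height $m$ gives $\qbinom{m+2}{2}$ (this is the $\qbinom{1+f_{k+1}}{f_k}$ factor with $f_{k+1}=m+1$, $f_k=m$, combined with the trivial $\qbinom{1+f_{k-1}}{f_k}$), a fall from height $m+1$ gives $\qbinom{m+2}{2}$ likewise, and a level step at height $m$ gives $\qbinom{m+1}{1}^2$. So the $q$-binomial factors already match on the nose; the real content is in the powers of $q$.

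The main step is therefore a bookkeeping identity for the $q$-exponents. On the left of \eqref{-1} we have the global exponent $-n(n-1)/2 + n(n-1)/2 + \sum_{k=1}^{n-1}f_k(f_k - f_{k+1} - 2)$ from \eqref{n(n-1)}, i.e. $\sum_{k=1}^{n-1}f_k(f_k-f_{k+1}-2)$. On the right, each rise at height $m$ carries $q^{-3m}$, each fall from height $m+1$ carries $q^{-(m+1)}$, i.e. $q^{-f_{k+1}-1}$ written in terms of the lower endpoint of the fall is $q^{-f_{k}}$... — I would be careful here and settle the indexing once: with $w(k,k-1)=\be_{k-1}$, a fall from $f_k$ to $f_{k+1}=f_k-1$ contributes $\be_{f_k-1}(q)=q^{-f_k}\qbinom{f_k+1}{2}$. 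Level steps at height $m$ carry $q^{-2m}$. So I must verify
\[
\sum_{k=1}^{n-1} f_k(f_k-f_{k+1}-2) \;=\; -3\!\!\sum_{\text{rises}}\!\! f_k \;-\!\! \sum_{\text{falls}}\!\! f_k \;-\,2\!\!\sum_{\text{levels}}\!\! f_k
\]
for every path in $M_n$. The right way to prove this is telescoping: group the total exponent as a sum over the $n$ steps $k=0,\dots,n-1$ and show the per-step contributions coincide. On a rise $f_{k+1}=f_k+1$ the summand $f_k(f_k-f_{k+1}-2)$ equals $-3f_k$; on a fall $f_{k+1}=f_k-1$ it equals $f_k(-1)= -f_k$; on a level step it equals $-2f_k$ — but one must be careful that the left sum runs $k=1,\dots,n-1$ while the natural step index runs $k=0,\dots,n-1$, and that the summand in \eqref{n(n-1)} is indexed by $f_k$ (the left endpoint of step $k\to k+1$ in my convention) versus the path convention in \eqref{qG}. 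Reconciling these two indexings — in particular using $f_0=f_n=0$ to absorb the boundary terms — is the one place where an off-by-one slip is easy, so that is the part to do slowly.

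The hard part, then, is purely this reconciliation of indexing conventions and the verification that the chosen $\al_m,\be_m,\gamma_m$ reproduce both the $q$-powers and the $q$-binomials step by step; once that is checked, \eqref{-1} is immediate from \eqref{n(n-1)}. I would organize the proof as: (1) rewrite \eqref{qG} as \eqref{n(n-1)} by pulling out $q^{n(n-1)/2}$ (already done in the excerpt), (2) identify, for a fixed $\bff\in M_n$, the factorization of $\prod_{k=0}^{n-1}w(f_k,f_{k+1})$ into rise/fall/level contributions, (3) match $q$-binomial parts, (4) match $q$-exponents via the telescoping identity above, invoking $f_0=f_n=0$, and (5) conclude. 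No result beyond \eqref{qG}/\eqref{n(n-1)} is needed; Flajolet's Theorem~\ref{Fl} is not used in this lemma but only afterwards.
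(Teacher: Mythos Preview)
Your proposal is correct and follows exactly the route the paper has in mind: the paper simply states that formula~\eqref{n(n-1)} implies the lemma, and your plan---reindex the two $q$-binomial products by steps (using $f_0=f_n=0$ to cover the boundary steps), then check that each rise/fall/level contributes precisely $\al_{f_k}$, $\be_{f_{k+1}}$, $\gamma_{f_k}$ both in the $q$-binomial part and in the $q$-exponent $f_k(f_k-f_{k+1}-2)$---is the straightforward unpacking of that implication. One small labeling slip: for a rise at step $k\to k+1$ the ``trivial'' companion factor is $\qbinom{1+f_k}{f_{k+1}}$ (the first product at index $k+1$), not $\qbinom{1+f_{k-1}}{f_k}$; with that corrected your per-step matching goes through verbatim.
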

In order to use the Flajolet theorem we need to get rid of the factor $q^{n(n-1)/2}$ in \eqref{-1}.
We introduce the notation 
$$\tilde h_n(q)=q^{n(n-1)/2} h_n(q^{-1})$$ 
(note that the degree of $h_n(q)$ is exactly $n(n-1)/2$). 
Let $\tilde h(q,s)=\sum_{n\ge 0} \tilde h_n(q) s^n$. We note that
\[
\gamma_m(q)=\binom{m+1}{1}_{\! {q^{-1}}}^2,\ \al_m(q)\beta_m(q)=q^{-1}\binom{m+2}{2}^2_{\! {q^{-1}}}.
\] 
Using Theorem \ref{Fl} we arrive at the following theorem.
\begin{thm}\label{main}
The generating function  $\tilde h(q,s)$ can be written as follows
\begin{equation}\label{f1}
\tilde h(q,s) = \cfrac{1}{1-s
          - \cfrac{qs^2}{1-\qbinom{2}{1}^2s
          - \cfrac{q\qbinom{3}{2}^2s^2}{1-\qbinom{3}{1}^2s
          - \cfrac{q\qbinom{4}{2}^2s^2}{1-\qbinom{4}{1}^2s-\dots}}}}
\end{equation}
\end{thm}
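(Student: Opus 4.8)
The plan is to obtain \eqref{f1} by feeding the weighted Motzkin sum of Lemma~\ref{L-1} into the Flajolet continued fraction (Theorem~\ref{Fl}), producing first a continued fraction for $\sum_n q^{-n(n-1)/2}h_n(q)s^n$, and then to pass to $\tilde h(q,s)$ by applying the substitution $q\mapsto q^{-1}$.

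First I would observe that, by the very definition of the weighted generating function $F$, Lemma~\ref{L-1} is exactly the statement
\[
F\bigl(s;\al_\bullet(q),\beta_\bullet(q),\gamma_\bullet(q)\bigr)=\sum_{n\ge 0}s^n\sum_{\bff\in M_n}\prod_{k=0}^{n-1}w(f_k,f_{k+1})=\sum_{n\ge 0}q^{-n(n-1)/2}h_n(q)\,s^n .
\]
Since the weights $\al_m(q),\beta_m(q),\gamma_m(q)$ all lie in $\bZ[q,q^{-1}]$, this is an identity in $\bZ[q,q^{-1}][[s]]$. Flajolet's theorem is a formal power-series identity valid over an arbitrary commutative ring, so it applies here and yields
\[
\sum_{n\ge 0}q^{-n(n-1)/2}h_n(q)\,s^n=\cfrac{1}{1-\gamma_0(q)s-\cfrac{\al_0(q)\beta_0(q)s^2}{1-\gamma_1(q)s-\cfrac{\al_1(q)\beta_1(q)s^2}{1-\gamma_2(q)s-\dots}}} ,
\]
the right-hand side being a well-defined element of $\bZ[q,q^{-1}][[s]]$ because every partial numerator is divisible by $s^2$, so the $N$-th convergent already pins down all coefficients up to $s^{2N-1}$.

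Next I would apply the ring automorphism $q\mapsto q^{-1}$ of $\bZ[q,q^{-1}][[s]]$ to both sides. On the left, $\sum_n q^{-n(n-1)/2}h_n(q)s^n$ becomes $\sum_n q^{n(n-1)/2}h_n(q^{-1})s^n=\tilde h(q,s)$, which is precisely the definition $\tilde h_n(q)=q^{n(n-1)/2}h_n(q^{-1})$. On the right, I would invoke the two identities already recorded in the text, $\gamma_m(q)=\binom{m+1}{1}_{q^{-1}}^2$ and $\al_m(q)\beta_m(q)=q^{-1}\binom{m+2}{2}_{q^{-1}}^2$, which upon the substitution turn into
\[
\gamma_m(q^{-1})=\qbinom{m+1}{1}^2,\qquad \al_m(q^{-1})\beta_m(q^{-1})=q\,\qbinom{m+2}{2}^2 ;
\]
in particular $\gamma_0(q^{-1})=1$ and $\al_0(q^{-1})\beta_0(q^{-1})=q$. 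Plugging these entries into the continued fraction above reproduces \eqref{f1} verbatim, which is the assertion of the theorem.

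As for where the real effort sits: essentially all of the content is in Lemma~\ref{L-1} — that is, in recognising the fermionic formula \eqref{qG}, reorganised as \eqref{n(n-1)}, as a Motzkin-path sum whose edge weights factor as the prescribed $\al_m,\beta_m,\gamma_m$; once that identification is available, the theorem itself is pure bookkeeping. The one point I would be careful about is that these weights have poles at $q=0$, so Theorem~\ref{Fl} must be used in its formal (not analytic) form over $\bZ[q,q^{-1}]$, and it is this formal validity that also legitimises the substitution $q\mapsto q^{-1}$, that map being an automorphism of $\bZ[q,q^{-1}][[s]]$. A minor secondary check is to confirm the text's assertion $\deg_q h_n(q)=n(n-1)/2$, which follows from $\dim_{\bC}\Fl^a_n=n(n-1)/2$ and guarantees that $\tilde h_n$ is the genuine reversal of $h_n$ (with constant term $1$); this is, however, not needed for the identity itself.
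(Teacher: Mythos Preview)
Your proposal is correct and follows exactly the route the paper takes: apply Flajolet's theorem to the weighted Motzkin sum of Lemma~\ref{L-1}, then use the identities $\gamma_m(q)=\binom{m+1}{1}_{q^{-1}}^2$ and $\al_m(q)\beta_m(q)=q^{-1}\binom{m+2}{2}_{q^{-1}}^2$ together with the substitution $q\mapsto q^{-1}$ to rewrite everything in terms of $\tilde h(q,s)$. The paper's proof is the one-line ``Follows from Lemma~\ref{L-1} and the Flajolet theorem''; you have simply unpacked that sentence, and your added remarks about working formally over $\bZ[q,q^{-1}][[s]]$ are a reasonable way to make the substitution step precise.
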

\begin{proof}
Follows from Lemma \ref{L-1} and the Flajolet theorem. 
\end{proof}

\begin{cor}
\begin{equation}\label{f2}
\tilde h(q,s) = \cfrac{1}{1
          - \cfrac{s}{1
          - \cfrac{qs}{1
          - \cfrac{\qbinom{3}{2}s}{1
          -\cfrac{q\qbinom{3}{2}s}{1
          - \cfrac{\qbinom{4}{2}s}{1
          -\cfrac{q\qbinom{4}{2}s}{1
          -\dots}}}}}}}
\end{equation}
\end{cor}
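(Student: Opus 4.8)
The plan is to derive \eqref{f2} from \eqref{f1} by the classical \emph{contraction} formula relating Jacobi-type and Stieltjes-type continued fractions. Recall that for any sequence $c_1,c_2,\dots$, the Stieltjes fraction $1/(1-c_1s/(1-c_2s/(1-\cdots)))$ has \emph{even part}
\[
\cfrac{1}{1-c_1s-\cfrac{c_1c_2s^2}{1-(c_2+c_3)s-\cfrac{c_3c_4s^2}{1-(c_4+c_5)s-\cfrac{c_5c_6s^2}{1-\cdots}}}},
\]
obtained by retaining only the even-indexed convergents; since those form a subsequence of all convergents, the Stieltjes fraction and its even part define the same element of $\bC[[s]]$. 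This identity is standard (see \cite{Fl}), and can also be checked directly from the three-term recurrences $A_n=A_{n-1}-c_nsA_{n-2}$ and $B_n=B_{n-1}-c_nsB_{n-2}$ for the numerators and denominators of the convergents. So it suffices to show that the even part of the right-hand side of \eqref{f2} coincides with the right-hand side of \eqref{f1}, which by Theorem \ref{main} equals $\tilde h(q,s)$.

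Carrying this out, I would read off the Stieltjes coefficients of \eqref{f2} as $c_1=1$ and, for $m\ge 1$, $c_{2m}=q\qbinom{m+1}{2}$, $c_{2m+1}=\qbinom{m+2}{2}$, and substitute them into the contraction formula. Then the $m$-th partial numerator is $c_1c_2s^2=qs^2$ for $m=1$ and $c_{2m-1}c_{2m}s^2=q\qbinom{m+1}{2}^2s^2$ for $m\ge 2$; these are covered uniformly by $q\qbinom{m+1}{2}^2s^2$ since $\qbinom{2}{2}=1$, matching the numerators $qs^2, q\qbinom{3}{2}^2s^2, q\qbinom{4}{2}^2s^2,\dots$ of \eqref{f1}. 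The linear term of the $m$-th denominator is $c_1s=s$ for $m=0$ and $(c_{2m}+c_{2m+1})s$ for $m\ge 1$, so the whole statement reduces to the single $q$-binomial identity
\[
q\qbinom{m+1}{2}+\qbinom{m+2}{2}=\qbinom{m+1}{1}^{2},\qquad m\ge 1.
\]

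This last identity I would prove by elementary $q$-integer arithmetic: setting $[n]=\qbinom{n}{1}=(1-q^n)/(1-q)$ and using $\qbinom{n}{2}=[n]\,[n-1]/[2]$, it becomes $[m+1]\bigl(q[m]+[m+2]\bigr)=[2]\,[m+1]^2$, i.e. $q[m]+[m+2]=[2]\,[m+1]$, which follows at once from $[m+2]-[m+1]=q^{m+1}$, $[m+1]-[m]=q^m$ and $[2]=1+q$. I do not expect any real obstacle: the argument is entirely bookkeeping of the two interleaved families $c_{2m},c_{2m+1}$ in the contraction formula, together with the one elementary identity above. The only spot needing a moment's care is checking that the leading numerator $qs^2$ and leading linear term $s$ of \eqref{f1} are indeed the $m=1$ and $m=0$ instances of the general patterns.
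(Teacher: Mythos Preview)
Your argument is correct and is essentially the paper's own proof: the paper invokes exactly the same contraction identity (quoted from \cite{DZ}, Lemma~2) to pass from the Jacobi-type fraction of Theorem~\ref{main} to the Stieltjes-type fraction \eqref{f2}. You supply more detail than the paper does---in particular the explicit verification of $q\qbinom{m+1}{2}+\qbinom{m+2}{2}=\qbinom{m+1}{1}^{2}$, which the paper leaves implicit---but the route is the same.
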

\begin{proof}
Recall the following formula (see \cite{DZ}, Lemma $2$)
\[
\cfrac{c_0}{1-c_1s-\cfrac{c_1c_2s^2}{1-(c_2+c_3)s-\cfrac{c_3c_4s^2}{1-(c_4+c_5)s-\dots}}}=
\cfrac{c_0}{1-\cfrac{c_1s}{1-\cfrac{c_2s}{1-\cfrac{c_3s}{1-\dots}}}}.
\]  
Now our corollary follows from Theorem \ref{main}.
\end{proof}

Specializing at $q=1$ we arrive at formula \eqref{Mp} for the normalized median Genocchi numbers.
\begin{cor}
The number $h_n$ is equal to the weighted sum over the set $M_n$ of  Motzkin paths 
$\sum_{\bff\in M_n} \prod_{k=0}^{n-1} w(f_k,f_{k+1})$
with the weights $w(\cdot,\cdot)$ defined by 
\[
\al_m=(m+1)(m+2)/2=\beta_m,\ \gamma_m=(m+1)^2.
\]
The generating function $\sum_{n\ge 0} h_n s^n$ is given by the continued fraction
\begin{equation}\label{hn}
\cfrac{1}{1
          - \cfrac{s}{1
          - \cfrac{s}{1
          - \cfrac{3s}{1
          -\cfrac{3s}{1
          - \cfrac{6s}{1
          -\cfrac{6s}{1
          -\cfrac{10s}{1
          -\dots}}}}}}}}
\end{equation}
\end{cor}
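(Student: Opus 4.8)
The plan is to derive both assertions by specializing the results of the previous subsections at $q=1$. For the weighted Motzkin formula I would start from Lemma~\ref{L-1}. Setting $q=1$ kills the prefactor, since $q^{-n(n-1)/2}$ becomes $1$, so the identity reads $h_n=h_n(1)=\sum_{\bff\in M_n}\prod_{k=0}^{n-1}w(f_k,f_{k+1})$, with the weights $w(\cdot,\cdot)$ built from the $q=1$ values of $\al_m(q),\be_m(q),\gamma_m(q)$. As $q^{-3m},q^{-m-1},q^{-2m}$ all equal $1$ at $q=1$ and $\qbinom{m+2}{2}$ becomes $(m+1)(m+2)/2$ while $\qbinom{m+1}{1}$ becomes $m+1$, the specialized weights are exactly $\al_m=\be_m=(m+1)(m+2)/2$ and $\gamma_m=(m+1)^2$, which is the first claim.

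To check that this agrees with formula~\eqref{Mp}, I would rewrite each step weight symmetrically: on a rise ($f_{k+1}=f_k+1$) the weight $\al_{f_k}$ equals $(1+f_k)(1+f_{k+1})/2$; on a fall ($f_{k+1}=f_k-1$) the weight $\be_{f_k-1}$ again equals $(1+f_k)(1+f_{k+1})/2$; and on a level step the weight $\gamma_{f_k}$ equals $(1+f_k)(1+f_{k+1})$. Multiplying over all $n$ steps gives $\prod_{k=0}^{n-1}w(f_k,f_{k+1})=2^{-l(\bff)}\prod_{k=0}^{n-1}(1+f_k)(1+f_{k+1})$, and this product telescopes to $2^{-l(\bff)}\prod_{k=0}^{n}(1+f_k)^2$ once the boundary factors are cancelled using $f_0=f_n=0$; this is precisely the summand of~\eqref{Mp}. (Alternatively one can skip the comparison with~\eqref{Mp} altogether, since Lemma~\ref{L-1} already yields the weighted sum directly.)

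For the continued fraction~\eqref{hn}, the shortest route is to put $q=1$ in~\eqref{f2}. On the left, $\tilde h(1,s)=\sum_{n\ge0}\tilde h_n(1)s^n=\sum_{n\ge0}h_ns^n$ because $\tilde h_n(1)=h_n(1)=h_n$; on the right, $\qbinom{k}{2}$ specializes to $\binom{k}{2}=k(k-1)/2$, so the numerators of the successive levels become $s,\,s,\,3s,\,3s,\,6s,\,6s,\,10s,\,\dots$, which is exactly~\eqref{hn}. If one prefers an independent derivation, one applies Flajolet's Theorem~\ref{Fl} to the weights $\al_m,\be_m,\gamma_m$ above---obtaining the ``fat'' fraction with $\gamma_0=1$, $\al_0\be_0=1$, $\gamma_1=4$, $\al_1\be_1=9$, $\gamma_2=9,\dots$---and then contracts it with the identity of~\cite{DZ}, Lemma~$2$, taking $c_0=1$ and $c_{2m-1}=c_{2m}=\binom{m+1}{2}$ for $m\ge1$.

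The whole argument is routine; the only place demanding a little attention is the index bookkeeping in the contraction identity, i.e.\ checking that $c_{2m-1}=c_{2m}=m(m+1)/2$ satisfies $c_1=\gamma_0$, $c_1c_2=\al_0\be_0$, $c_{2m}+c_{2m+1}=\gamma_m$ and $c_{2m-1}c_{2m}=\al_{m-1}\be_{m-1}$. These reduce to the elementary identities $\tfrac{m(m+1)}{2}+\tfrac{(m+1)(m+2)}{2}=(m+1)^2$ and $\bigl(\tfrac{m(m+1)}{2}\bigr)^2=\al_{m-1}\be_{m-1}$, and a quick check against the known values $h_0,\dots,h_4=1,1,2,7,38$ confirms that the weights are right.
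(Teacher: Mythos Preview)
Your proposal is correct and follows exactly the approach the paper intends: the corollary is obtained simply by specializing the preceding results (Lemma~\ref{L-1}, Theorem~\ref{main}, and formula~\eqref{f2}) at $q=1$, and the paper does not give any further argument beyond the sentence ``Specializing at $q=1$ we arrive at formula~\eqref{Mp}.'' Your write-up supplies the routine verifications (the $q=1$ values of the weights, the telescoping that recovers~\eqref{Mp}, and the check of the contraction constants $c_k$) that the paper leaves implicit, but the substance is the same.
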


\begin{cor}
$\tilde h_n(q)=\bar c_{n+1}(q)$.
\end{cor}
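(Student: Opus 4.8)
The plan is to identify both sides of the claimed identity $\tilde h_n(q)=\bar c_{n+1}(q)$ with the \emph{same} continued fraction expansion, using \eqref{f2} for the left-hand side and deriving a matching continued fraction for the Han--Zeng polynomials $\bar c_{n+1}(q)$ from their defining recursion. Since \eqref{f2} already expresses the generating function $\tilde h(q,s)=\sum_{n\ge 0}\tilde h_n(q)s^n$ as the Stieltjes-type (J-fraction / S-fraction) continued fraction with partial numerators $1,\,s,\,qs,\,\qbinom{3}{2}s,\,q\qbinom{3}{2}s,\,\qbinom{4}{2}s,\,q\qbinom{4}{2}s,\dots$, it suffices to show that $\sum_{n\ge 0}\bar c_{n+1}(q)s^n$ has the identical expansion.

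First I would recall (from \cite{HZ1}, \cite{HZ2}) that Han and Zeng themselves prove a continued fraction formula for the generating function $\sum_{n\ge 1}\bar c_n(q)s^{n-1}$ arising from their functional equation for $C_n(x,q)$; the standard route is to convert the linear recursion $C_n(x,q)=(1+qx)\frac{(1+qx)C_{n-1}(1+qx,q)-xC_{n-1}(x,q)}{1+qx-x}$ into a divided-difference (Dunkl-type) operator acting on a generating series, and then to read off the continued fraction via the classical correspondence between three-term recurrences for orthogonal-polynomial-like families and J-fractions. The Han--Zeng continued fraction, after normalizing by the factors $(1+q)^{n-1}$ built into $\bar c_n$, has partial quotients whose products reproduce exactly $q\qbinom{m+2}{2}^2_{q^{-1}}$-type coefficients; matching these against the $\al_m(q)\beta_m(q)=q^{-1}\qbinom{m+2}{2}^2_{q^{-1}}$ and $\gamma_m(q)=\qbinom{m+1}{1}^2_{q^{-1}}$ computed in the paragraph before Theorem~\ref{main} is then a direct, if slightly tedious, comparison. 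Concretely I would: (i) write Han--Zeng's continued fraction in the contracted S-fraction form $\cfrac{1}{1-\cfrac{d_1 s}{1-\cfrac{d_2 s}{1-\cdots}}}$; (ii) verify $d_1=1$, $d_2=q$, and $d_{2k-1}=\qbinom{k+1}{2}$, $d_{2k}=q\qbinom{k+1}{2}$ for $k\ge 2$ — these are precisely the partial numerators in \eqref{f2}; (iii) conclude the two generating functions coincide coefficientwise, hence $\tilde h_n(q)=\bar c_{n+1}(q)$ for all $n\ge 0$.

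An alternative, more self-contained route avoids quoting Han--Zeng's continued fraction: starting from the recursion defining $C_n$, one shows directly that $B_n(x,q):=C_n(x,q)$ satisfies a three-term recurrence of the form $B_n=(x-\gamma'_{n})B_{n-1}-\al'_{n-1}\beta'_{n-1}B_{n-2}$ after an appropriate change of variables, identify the coefficients, specialize at $x=1$, divide by $(1+q)^{n-1}$, and invoke the standard J-fraction theorem (the same Flajolet-type correspondence used for Theorem~\ref{Fl}) to obtain $\sum \bar c_{n+1}(q)s^n$ as a continued fraction, which is then compared with \eqref{f1}. This keeps everything inside the Motzkin-path/continued-fraction formalism already set up in Section~\ref{cf}.

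The main obstacle will be step (ii): carefully tracking the powers of $q$ and the $(1+q)^{n-1}$ normalization through Han--Zeng's recursion so that their partial quotients land exactly on $\qbinom{k+1}{2}$ and $q\qbinom{k+1}{2}$, rather than on these quantities up to a $q$-power shift or a $q\leftrightarrow q^{-1}$ substitution. Because \eqref{f1}--\eqref{f2} naturally produce coefficients in $q^{-1}$ (note $\gamma_m(q)=\qbinom{m+1}{1}^2_{q^{-1}}$, $\al_m\beta_m=q^{-1}\qbinom{m+2}{2}^2_{q^{-1}}$) while the "reversal" $\tilde h_n(q)=q^{n(n-1)/2}h_n(q^{-1})$ flips this back to polynomials in $q$, one must be vigilant that the degree bookkeeping matches Han--Zeng's normalization precisely; getting the reversal compatible on both sides is where a sign or exponent error would most easily creep in. Once the partial numerators are shown to agree, the identity is immediate from uniqueness of continued fraction (equivalently, Hankel-determinant) expansions.
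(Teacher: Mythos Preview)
Your primary route---cite Han--Zeng's continued fraction for $\sum_{n\ge 1}\bar c_n(q)s^{n-1}$ and match partial numerators against \eqref{f2}---is exactly what the paper does; the paper simply points to formula~(18) of \cite{HZ1} and records the factorizations
\[
\qbinom{2n}{2}=(1+q^2+\cdots+q^{2n-2})(1+q+\cdots+q^{2n-2}),\qquad
\qbinom{2n+1}{2}=(1+q^2+\cdots+q^{2n-2})(1+q+\cdots+q^{2n}),
\]
which are precisely what make your step~(ii) immediate. Your $q\leftrightarrow q^{-1}$ worry is unnecessary here, since by the time \eqref{f2} is written the reversal $\tilde h_n(q)=q^{n(n-1)/2}h_n(q^{-1})$ has already been absorbed and all partial numerators are ordinary $q$-binomials in~$q$.
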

\begin{proof}
Formula $(18)$ in \cite{HZ1} gives a continued fraction form of the generating function of the polynomials
$\bar c_n(q)$. Comparing this formula with \eqref{f2} and using the equations
\begin{gather*}
\qbinom{2n}{2}=(1+q^2+q^4+\dots+ q^{2n-2})(1+q+q^2+\dots+q^{2n-2}),\\
\qbinom{2n+1}{2}=(1+q^2+q^4+\dots+ q^{2n-2})(1+q+q^2+\dots+q^{2n}),
\end{gather*}
we obtain $\tilde h_n(q)=\bar c_{n+1}(q)$.
\end{proof}

We also derive the Viennot formula for the generating function of the (non-normalized) median Genocchi numbers $H_{2n+1}$ (see \cite{Du}, \cite{DZ}, \cite{V1}, \cite{V3}):
\begin{cor}
\[
1+\sum_{n=1}^\infty H_{2n-1}s^n=
\cfrac{1}{1-\cfrac{1^2s}{1-\cfrac{1^2s}{1-\cfrac{2^2s}{1-\cfrac{2^2s}{\dots}}}}}
\]
\end{cor}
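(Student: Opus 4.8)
The plan is to deduce the formula from the $q=1$ continued fraction \eqref{hn} for the ordinary generating function $G(s):=\sum_{n\ge0}h_ns^n$, using in addition only the divisibility $H_{2n+1}=2^nh_n$ and the contraction identity \cite{DZ}, Lemma~2, already quoted above. The divisibility (see subsection \ref{St}) gives $H_{2n-1}=2^{n-1}h_{n-1}$ for $n\ge1$, so
\[
1+\sum_{n=1}^{\infty}H_{2n-1}s^n=1+\sum_{n\ge1}2^{n-1}h_{n-1}s^n=1+s\sum_{m\ge0}h_m(2s)^m=1+s\,G(2s).
\]
Hence it suffices to prove that the continued fraction $V(s)$ on the right of the statement (the Viennot fraction) equals $1+s\,G(2s)$.

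Delete the first partial numerator of $V(s)$: this produces the $S$-fraction $\tilde V(s)$ with partial numerators $1^2s,\,2^2s,\,2^2s,\,3^2s,\,3^2s,\dots$, and $V(s)=\dfrac{1}{1-s\,\tilde V(s)}$. On the other hand, substituting $s\mapsto2s$ in \eqref{hn} shows that $G(2s)$ is the $S$-fraction whose partial numerators are $k(k+1)s$, each repeated twice: $2s,\,2s,\,6s,\,6s,\,12s,\,12s,\dots$. Applying \cite{DZ}, Lemma~2, to both $S$-fractions and comparing coefficients — the one genuine computation in the proof — one finds that the two resulting $J$-fractions agree except in their leading linear coefficient:
\[
\frac{1}{\tilde V(s)}=1-s-\Phi(s),\qquad\frac{1}{G(2s)}=1-2s-\Phi(s),
\]
with the same $\Phi(s)=\cfrac{4s^2}{1-8s-\cfrac{36s^2}{1-18s-\cfrac{144s^2}{1-32s-\dots}}}$ in both cases; indeed, the $j$-th quadratic coefficient is $\bigl(j(j+1)\bigr)^2$ and the $j$-th linear coefficient past the first is $2(j+1)^2$, whether it is produced by the pair $\bigl((j+1)^2,(j+1)^2\bigr)$ sitting in $\tilde V$ or by the pair $\bigl(j(j+1),(j+1)(j+2)\bigr)$ of neighbouring numerators in $G(2s)$.

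Subtracting the two displays gives $1/G(2s)=1/\tilde V(s)-s$, hence $G(2s)=\dfrac{\tilde V(s)}{1-s\,\tilde V(s)}=\tilde V(s)\,V(s)$. Therefore
\[
V(s)-1=\frac{1}{1-s\,\tilde V(s)}-1=\frac{s\,\tilde V(s)}{1-s\,\tilde V(s)}=s\,\tilde V(s)\,V(s)=s\,G(2s),
\]
which together with the first displayed line is exactly the assertion.

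The only real content is the middle step — that removing one partial numerator from the Viennot fraction and contracting yields precisely the same $J$-fraction tail $\Phi(s)$ as $G(2s)$ — and this is nothing more than matching coefficients after the rescaling $s\mapsto2s$; no continued-fraction identity beyond \cite{DZ}, Lemma~2, is needed. (One could equivalently phrase the same computation through the "odd contraction" of an $S$-fraction, the companion of \cite{DZ}, Lemma~2, but that would require recording one extra identity.)
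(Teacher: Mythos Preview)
Your proof is correct. Both you and the paper start from the relation $H_{2n-1}=2^{n-1}h_{n-1}$ to reduce the claim to $V(s)=1+s\,G(2s)$, and both rely on \cite{DZ}, Lemma~2, but the two arguments diverge from there.

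The paper works directly with the $J$-fraction \eqref{f1} specialized at $q=1$: after substituting $s\mapsto 2s$ this already gives $G(2s)$ in $J$-form, and then the \emph{odd} contraction identity from \cite{DZ}, Lemma~2 (the version beginning with $c_0+\dots$) is applied once to $1+s\,G(2s)$, producing the Viennot $S$-fraction in a single step. You instead start from the $S$-fraction \eqref{hn}, peel off the top level of $V(s)$ to obtain $\tilde V(s)$, contract both $G(2s)$ and $\tilde V(s)$ to $J$-fractions via the \emph{even} contraction form of the same lemma, and observe that the resulting tails $\Phi(s)$ coincide while the leading linear terms differ by exactly $s$; the identity $V(s)=1+s\,G(2s)$ then drops out algebraically. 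Your coefficient match is right: for $j\ge 1$ one has $(j+1)^2+(j+1)^2=j(j+1)+(j+1)(j+2)=2(j+1)^2$ and $j^2(j+1)^2=\bigl(j(j+1)\bigr)^2$.

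What each buys: the paper's route is a line shorter but uses the second (odd) form of \cite{DZ}, Lemma~2, which you explicitly chose to avoid; your route needs only the even contraction already used earlier in the paper to derive \eqref{hn} from \eqref{f1}, at the cost of the extra comparison step. Either is perfectly acceptable.
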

\begin{proof}
Recall the formula $H_{2n+1}=2^{n}h_n$. Therefore, one has
\[
1+\sum_{n=1}^\infty H_{2n-1}s^n=1+\sum_{n=0}^\infty h_n2^n s^{n+1}=
1+s\sum_{n=0}^\infty h_n (2s)^n.
\]
Specializing  \eqref{f1} at $q=1$, we obtain that the generating function for the (non-normalized) median Genocchi numbers is given by
\begin{equation*}
1+\cfrac{s}{1-2s-
  \cfrac{4s^2}{1-2\cdot 4s-
  \cfrac{4\cdot 3^2s^2}{1-2\cdot 9s-
  \cfrac{4\cdot 6^2s^2}{1-2\cdot 16s-
  \cfrac{4\cdot 10^2s^2}{1-2\cdot 25s-\dots}}}}}.
\end{equation*}
Finally, we use the following formula from \cite{DZ}, Lemma 2:
\[
c_0+\cfrac{c_0c_1s}{1-(c_1+c_2)s-
    \cfrac{c_2c_3s^2}{1-(c_3+c_4)s-
    \cfrac{c_4c_5s^2}{1-\dots}}}=
\cfrac{c_0}{1-
\cfrac{c_1s}{1-
\cfrac{c_2s}{1-
\cfrac{c_3s}{1-\dots}}}}.
\]
\end{proof}

\section*{Acknowledgments}
We are grateful to Michael Finkelberg and Sergei Lando for useful discussions. 
This work was partially supported
by the RFBR Grant 09-01-00058,
by the grant Scientific Schools 6501.2010.2 and by the Dynasty Foundation.


\begin{thebibliography}{99}

\bibitem[B]{B}
D. Barsky, {\it Congruences pour les nombres de Genocchi de 2e esp\`ece},
Groupe d'\'etude d'Analyse ultram\'etrique,
8e ann\'ee, no. 34, 1980/81, 13 pp.

\bibitem[CFR]{CFR}
G.~Cerulli Irelli, E.~Feigin, M.~Reineke, 
{\it Quiver Grassmannians and degenerate flag varieties}, arXiv:1106.2399, to appear in 
Algebra and Number Theory. 

\bibitem[De]{De}
H. Dellac, {\it Problem 1735}, L'Interm\'ediaire des Math\'ematiciens,
7 (1900), 9--10.

\bibitem[Du]{Du}
D.~Dumont, {\it Interpr\' etations combinatoires des nombres de Genocchi,}
Duke Math. J., 41 (1974) , 305--318.

\bibitem[DR]{DR}
D. Dumont and A. Randrianarivony,
{\it D\'erangements et nombres de Genocchi}, Discrete Math. 132 (1994), 37--49.

\bibitem[DZ]{DZ}
D. Dumont, J. Zeng, {\it Further results on Euler and Genocchi numbers},
Aequationes Mathemicae 47 (1994), 31--42.

\bibitem[DV]{DV}
D.~Dumont, G. Viennot, {\it A combinatorial interpretation of the Seidel generation of Genocchi
numbers}, Disc. Math. 6 (1980), 77-87.

\bibitem[Fe1]{Fe1}
E.~Feigin, {\it ${\mathbb G}_a^M$ degeneration of flag varieties}, arXiv:1007.0646,

\bibitem[Fe2]{Fe2}
E.~Feigin, {\it Degenerate flag varieties and the median Genocchi numbers}, arXiv:1101.1898,
to appear in Mathematical Research Letters.

\bibitem[FF]{FF}
E.~Feigin, M.~Finkelberg, {\it Degenerate flag varieties of type A: Frobenius splitting and BWB theorem},
arXiv:1103.1491.

\bibitem[FFL]{FFL}
E.~Feigin, M.~Finkelberg, P.~Littelmann,
{\it Symplectic degenerate flag varieties}, arXiv:1106.1399.

\bibitem[Fl]{Fl}
P.~Flajolet, {\it Combinatorial aspects of continued fractions}. Combinatorics 79 
(Proc. Colloq., Univ. Montreal, Montreal, Que., 1979), Part II. Ann. Discrete Math. 9 (1980), 217–222.

\bibitem[Fu]{Fu}
W.~Fulton, {\it Young tableaux, with applications to representation theory and geometry.}
Cambridge University Press, 1997.


\bibitem[G]{G}
I.Gessel {\it Applications of the classical umbral calculus}, Algebra Universalis {\bf 49} (2003), 397--434.


\bibitem[HZ1]{HZ1}
G.-N. Han and J. Zeng, {\it On a $q$-sequence that generalizes the median Genocchi numbers},
Ann. Sci. Math. Qu\'ebec 23 (1999), 63--72.

\bibitem[HZ2]{HZ2}
G.~Han, J.~Zeng, {\it q-Polynomes de Gandhi et statistique de Denert}, Discrete Math.
205 (1999), no. 1-3, 119-143.


\bibitem[K]{K}
G. Kreweras, {\it Sur les permutations compt\'ees par les nombres de Genocchi de 1-i\`ere et
2-i\`eme esp\`ece}, Europ. J. Combinatorics 18 (1997), 49--58.

\bibitem[Se]{Se}
L.~Seidel, {\it \"Uber eine einfache Enstehungsweise der Bernoullischen Zahlen und einiger verwandten Reihen}. M\"unch. Akad. Wiss. Math. Phys. Cl. Sitzungsber.4 (1877), 157-187.


\bibitem[Sl]{Sl}
N. J. A. Sloane, {\it Sequence  A000366}, The On-Line Encyclopedia of Integer Sequences,
http://oeis.org.

\bibitem[V1]{V1}
X.~G.~Viennot, {\it Interpr\' etations combinatoires des nombres d'Euler et Genocchi,} S\'eminaire de
Th\'eorie des Nombres, Ann\'ee 1980-1981, expos\'e no. 11.

\bibitem[V2]{V2}
G. Viennot, {\it Interpr\'etations combinatoires des nombres d'Euler et de Genocchi}, Seminar
on Number Theory, 1981/1982, No. 11, 94 pp., Univ. Bordeaux I, Talence, 1982.

\bibitem[V3]{V3}
G. Viennot, {\it Une th\'eorie combinatoire des polynomes orthogonaux g\'en\'eraux}. Note de 
conf\'erence, Universit\'e du Quebec a Montreal, 1983. 

\bibitem[ZZ]{ZZ} 
J.~Zeng, J.~Zhou, {\it A q-analog of the Seidel generation of Genocchi numbers}. Eur. J. Comb., 2006, pp. 364~381 

\end{thebibliography}
\end{document}